\DeclareMathOperator*{\argmin}{argmin}
\DeclareMathOperator{\diag}{diag}
\DeclareMathOperator{\subspan}{span}
\newtheorem{proposition}{Proposition}[section]
\newtheorem{theorem}{Theorem}[section]
\newtheorem{lemma}{Lemma}[section]
\newtheorem{definition}{Definition}[section]
\newtheorem{remark}{{\sc Remark}}[section]
\newtheorem{example}{Example}[section]
\numberwithin{equation}{section}
\def\psdid{{PSD-{id}} }
\def\half{{\frac{1}{2}}}
\title{
Convergence analysis of a locally accelerated preconditioned steepest descent 
method for Hermitian-definite generalized eigenvalue problems}
\author{
Yunfeng Cai\thanks{
LMAM \& School of Mathematical Sciences,
Peking University, Beijing, 100871, China,
{\tt yfcai@math.pku.edu.cn}
} \quad
Zhaojun Bai\thanks{Department of
Computer Science and Department of Mathematics,
University of California, Davis 95616, USA,
{\tt bai@cs.ucdavis.edu}
} \quad
John E. Pask\thanks{
Condensed Matter and Materials Division,
Lawrence Livermore National Laboratory,
Livermore, CA 94550, USA,
{\tt pask1@llnl.gov}
} \quad
N. Sukumar\thanks{
Department of Civil and Environmental Engineering,
University of California, Davis 95616, USA,
{\tt nsukumar@ucdavis.edu}
}
}
\date{March 21, 2016}
\begin{document}

\maketitle

\begin{abstract}
By extending the classical analysis techniques due to 
Samokish, Faddeev and Faddeeva, and Longsine and McCormick among others, 
we prove the convergence 
of preconditioned steepest descent with implicit deflation (PSD-id) method
for solving Hermitian-definite generalized eigenvalue problems.  
Furthermore, we derive a nonasymptotic estimate of the rate of convergence
of the \psdid method.  We show that with the proper choice of the shift,
the indefinite shift-and-invert preconditioner
is a locally accelerated preconditioner, and is
asymptotically optimal that leads to superlinear convergence. 
Numerical examples are presented to verify the theoretical results 
on the convergence behavior of the \psdid method for solving 
ill-conditioned Hermitian-definite generalized eigenvalue problems arising from 
electronic structure calculations.  
While rigorous and full-scale convergence proofs of 
preconditioned block steepest descent methods in practical use 
still largely eludes us, we believe the theoretical results presented 
in this paper sheds light on an improved understanding of the
convergence behavior of these block methods.

\vskip 2mm

\noindent {\bf Key words.}
eigenvalue problem, steepest descent method, preconditioning, 
superlinear convergence.

\vskip2mm

\noindent {\bf MSC.}  65F08, 65F15, 65Z05, 15A12.
\end{abstract}

%\tableofcontents
%\clearpage

\section{Introduction}\label{sec:intro}
%paragraph{Problem statement.} 
We consider the Hermitian-definite generalized eigenvalue problem
\begin{equation}\label{eq:ghep}
Hu=\lambda Su,
\end{equation}
where $H$ and $S$ are $n$-by-$n$ Hermitian matrices and $S$ is 
positive-definite. The scalar $\lambda$ and nonzero vector $u$ 
satisfying \eqref{eq:ghep} are called
{\em eigenvalue} and {\em eigenvector}, respectively.
The pair $(\lambda,u)$ is called an eigenpair.
All eigenvalues of \eqref{eq:ghep} are known to be real.
Our task is to compute few smallest eigenvalues and the 
corresponding eigenvectors. We are particularly interested in solving 
the eigenvalue problem~\eqref{eq:ghep},  
where the matrices $H$ and $S$ are large and sparse, and 
there is no obvious gap between the eigenvalues of interest 
and the rest. Furthermore, $S$ is nearly singular 
and $H$ and $S$ share a near-nullspace. 
It is called an ill-conditioned generalized eigenvalue problem 
in \cite{fix1972algorithm}, a term we will adopt in this paper. 
The ill-conditioned generalized eigenvalue problem is considered to 
be an extremely challenging problem.\footnote{W. Kahan, 
Refining the general symmetric definite eigenproblem, poster presentation at
Householder Symposium XVIII 2011, 
available http://www.cs.berkeley.edu/$\sim$wkahan/HHXVIII.pdf}   

\medskip
%\paragraph{Applications.}
Beside examples such as those cited in \cite{fix1972algorithm}, 
the ill-conditioned eigenvalue problem~\eqref{eq:ghep} arises from the
discretization of enriched Galerkin methods. The partition-of-unity
finite element (PUFE) method~\cite{melenk1996partition}, 
which falls within the class of enriched
Galerkin methods, is a promising approach 
in quantum-mechanical materials calculations, 
see \cite{cai2013hybrid} and references therein. 
In the PUFE method, physics-based basis functions are added to the classical 
finite element (polynomial basis) approximation, which affords the
method improved accuracy at reduced costs versus 
existing techniques. However, due to near linear-dependence
between the polynomial and enriched basis functions,
the system matrices that stem from such methods are ill-conditioned, and
share a large common near-nullspace. Furthermore, there is in 
general no clear gap between the eigenvalues that are sought and the rest. 
Another example of the ill-conditioned eigenvalue problem~\eqref{eq:ghep} arises
from modeling protein dynamics using 
normal-mode analysis \cite{levitt1985protein,nishikawa1987normal,brooks1995harmonic,levi:15}.  

\medskip

%\paragraph{4. Approaches and Contributions of this paper}
In this paper, we focus on a preconditioned
steepest descent with implicit deflation method, \psdid method in short, 
to solve the eigenvalue problems~\eqref{eq:ghep}. 
The basic idea of the \psdid method is simple. 
Denote all the eigenpairs of \eqref{eq:ghep} by $(\lambda_1, u_1)$,
$(\lambda_2,u_2)$, \dots, $(\lambda_n,u_n)$, and the eigenvalue and
eigenvector matrices by
$\Lambda=\diag(\lambda_1,\lambda_2,\dots,\lambda_n)$
and $U=[u_1 \; u_2\; \cdots\; u_n]$, respectively.
Assume that the eigenvalues $\{\lambda_i\}$  are in an ascending order
$\lambda_1 \le\lambda_2\le\cdots\le \lambda_n$.
The following variational principles are well-known, 
see \cite[p.99]{wilkinsonalgebraic} for example:
\begin{align}\label{eq:evew-i}
\lambda_i=\min_{U_{i-1}^HS z=0}\rho(z)
\quad \mbox{and} \quad u_i=\argmin_{U_{i-1}^HS z=0}\rho(z),
\end{align}
where $U_{i-1}=[u_1\; u_2\; \cdots \; u_{i-1}]$ and 
$\rho(z)$ is the Rayleigh quotient 
\begin{equation}\label{eq:rq}
\rho(z)=\frac{z^HHz}{z^H Sz}.
\end{equation}
On assuming that $U_{i-1}$ is known,
one can find the $i$th eigenpair by minimizing the
the Rayleigh quotient $\rho(z)$
with $z$ being $S$-orthogonal against $U_{i-1}$ under the algorithmic framework
of the preconditioned steepest descent minimization. 

\medskip 

The idea of computing the algebraically largest eigenvalue and its corresponding
eigenvector of \eqref{eq:ghep} (with $B = I$) using the steepest descent (SD)  
method dates back to early 1950s \cite{hestenes1951method} and 
\cite[Chap.7]{fadeev1963computational}.  
In \cite{longsine1980simultaneous}, block steepest descent (BSD)  
methods are proposed to compute several eigenpairs simultaneously.
The preconditioned steepest descent (PSD) method was  
introduced around late 1950s~\cite{samokish1958steepest,petryshyn1968eigenvalue}.
The block PSD (BPSD) methods have appeared in 
the literature, see \cite{bramble1996subspace,neymeyr2014iterative} 
and references therein.
Like the PSD method, the \psdid method studied in this paper computes 
one eigenpair at a time. To compute the $i$th eigenpair, the search 
subspace of \psdid
is {\em implicitly} orthogonalized against the previously computed 
$i-1$ eigenvectors. Furthermore, the preconditioner at each iteration is 
flexible (i.e, could change at every iteration) and can be indefinite, instead
of being fixed and positive definite as 
in~\cite{samokish1958steepest,bramble1996subspace,neymeyr2014iterative}.  

\medskip 

Over the past six decades, there has been significant work on 
the convergence analysis of the SD, PSD and BPSD methods.  
The convergence of the SD method to compute a single eigenpair 
is presented in \cite[Chap.7]{fadeev1963computational}. 
For the BSD method,  the convergence of the first eigenpair 
is presented in \cite{longsine1980simultaneous} and 
``ordered convergence'' for multiple eigenpairs is declared. 
The (nonasymptotic) rate of convergence of the PSD method is first studied 
in \cite{samokish1958steepest}, which later is proven to 
be sharp \cite{ovtchinnikov2006sharp}. 
A comprehensive review of the convergence estimates of the PSD method, 
is presented in \cite{bramble1996subspace}.
The theoretical proofs of the convergence of the BPSD method
have still {largely eluded us}, we refer the readers 
to \cite{bramble1996subspace,ovtchinnikov2006cluster}
and two recent papers \cite{neymeyr2014block,neymeyr2014iterative}. 
In this paper, we present two main results 
(Theorems~\ref{thm:psd2} and~\ref{thm:Samokish2}) 
on the convergence and nonasymptotic rate of convergence  
of the \psdid method. These results extend the classical 
ones due to Faddeev and Faddeeva \cite[sec.74]{fadeev1963computational} 
and Samokish \cite{samokish1958steepest} for the SD and PSD methods. 
We show that with the proper choice of the shift,  
the well-known {indefinite} shift-and-invert preconditioner
is a flexible and locally accelerated preconditioner, 
and is asymptotically optimal that 
leads to superlinearly converge  of the \psdid method. 
Numerical examples shows the superlinear convergence of the
\psdid method with locally accelerated preconditioners for solving 
ill-conditioned generalized eigenvalue problems~\eqref{eq:ghep} arising from 
full self-consistent electronic structure calculations. 

\medskip 

We would like to note that the main objective of this paper is to provide 
a rigorous convergence analysis of the \psdid method with flexible and 
locally accelerated preconditioners than to advocate the usage of the
\psdid method in practice.  
The BPSD methods~\cite{bramble1996subspace, neymeyr2014iterative} and 
a recent proposed locally accelerated BPSD (LABPSD) presented in our 
previous work~\cite{cai2013hybrid} have demonstrated their efficiency 
for finding several eigenpairs simultaneously.
While a rigorous and full-scale convergence proof of the 
the BPSD methods still largely eludes us, 
we believe the analysis of the \psdid method presented in this paper 
can shed light on an improved understanding of the 
convergence behavior of the BPSD methods 
such as the LAPBSD method~\cite{cai2013hybrid} for
solving the ill-conditioned generalized eigenvalue problem~\eqref{eq:ghep} 
arising from the PUFE simulation of electronic 
structure calculations.

\medskip 

The rest of this paper is organized as follows.
In section~\ref{sec:dpsd}, we present the \psdid method 
and discuss its basic properties.
In section~\ref{sec:analysis}, we provide a 
convergence proof and a nonasymptotic estimate of 
the convergence rate of the \psdid method.
An asymptotically optimal preconditioner is discussed 
in section~\ref{sec:opt}. Numerical examples 
to illustrate the theoretical results are presented in section~\ref{sec:numer}.
We close with some final remarks in section~\ref{sec:conclusion}.

\medskip 

In the spirit of reproducible research, 
Matlab scripts of an implementation of the \psdid method, and 
the data that used to generate numerical results 
presented in this paper can be obtained from the URL
http://dsec.pku.edu.cn/$\sim$yfcai/psdid.html.

%----------------------------------------------------------------------------

\section{Algorithm} \label{sec:dpsd}
Assuming that $U_{i-1}$ is already known, by \eqref{eq:evew-i}, 
one can find the $i$th eigenpair by minimizing the Rayleigh quotient $\rho(z)$
with $z$ being $S$-orthogonal against $U_{i-1}$.
Specifically, let us denote by $(\lambda_{i;j},u_{i;j})$ 
the $j$th approximation of $(\lambda_i,u_i)$ 
and assume that 
\begin{equation}  \label{eq:jassume} 
U^H_{i-1}S u_{i;j} = 0, \quad
\|u_{i;j}\|_S=1 \quad \mbox{and} \quad 
\lambda_{i;j}=\rho(u_{i;j}).
\end{equation}
To compute the $(j+1)$st approximate eigenpair 
$(\lambda_{i;j+1}, u_{i;j+1})$, by the steepest descent approach,  
the steepest decreasing direction 
of $\rho(z)$ is opposite to the gradient of $\rho(z)$ at $z=u_{i;j}$: 
\[
\nabla\rho(u_{i;j})= 2(H-\lambda_{i;j} S)u_{i;j}=2r_{i;j}.
\]
Furthermore, to accelerate the convergence, we use the following 
preconditioned search vector
\begin{equation}\label{eq:pij}
p_{i;j}=-K_{i;j}r_{i;j}, 
\end{equation}
where $K_{i;j}$ is a preconditioner. 
By a Rayleigh-Ritz projection based implementation, 
the $(j+1)$st approximate eigenpair 
$(\lambda_{i;j+1}, u_{i;j+1})$ 
computed by the preconditioned steepest descent method 
is given by
\begin{align}\label{eq:liju}
(\lambda_{i;j+1}, u_{i;j+1}) 
= (\gamma_i, Z_j w_i),
\end{align}
where $(\gamma_i,w_i)$ is the $i$th eigenpair of 
the projected matrix pair $(H_R, S_R) = (Z^H_j H Z_j, Z^H_jSZ_j)$, 
$\|w_i\|_{S_R}=1$, and $Z_j=[U_{i-1}\; u_{i;j} \; p_{i;j}]$ is the basis
matrix of the projection subspace. 
Here we assume that $Z_j$ is of full column rank. 

\bigskip

Algorithm~\ref{alg:psdid} is a summary of the aforementioned procedure. 
Since the first eigenvectors $U_{i-1}$ are implicitly deflated in the 
Rayleigh-Ritz procedure, 
we call Algorithm~\ref{alg:psdid} a preconditioned steepest descent 
with implicit deflation, {\psdid} in short. 
We note that the preconditioner $K_{i;j}$ is flexible. 
It can be changed at each iteration. 
If the preconditioner is fixed as a uniform positive definite matrix, 
i.e., $K_{i;j} = K > 0$,  then Algorithm~\ref{alg:psdid} 
is the SIRQIT-G2 algorithm in \cite{longsine1980simultaneous} with $K=I$ 
and initial vectors $X^{(0)}=[U_{i-1}\, u_{i;0}]$, and is the BPSD 
method \cite{knyazev2003efficient} with initial vectors $[U_{i-1}\, u_{i;0}]$.

\begin{algorithm}
\caption{\psdid}  \label{alg:psdid}
\begin{algorithmic}[1]
\REQUIRE $U_{i-1}$ 
         and initial vector $u_{i;0}$  

\ENSURE Approximate eigenpair $(\lambda_i, u_i)$ of $(\lambda_i, u_i)$

\STATE $\lambda_{i;0}= \rho(u_{i;0})$

\FOR {$j=0,1,\ldots$, until convergence}
\STATE compute $r_{i;j}=Hu_{i;j} - \lambda_{i;j}Su_{i;j}$

\STATE precondition $p_{i;j} = -K_{i;j}r_{i;j}$
\STATE compute the $i$th eigenpair $(\gamma_i,w_i)$ of 
 $(H_R,S_R)=(Z^H_j H Z_j, Z^H_jS Z_j)$,  $Z_j=[U_{i-1}\; u_{i;j} \; p_{i;j}]$
\STATE update $\lambda_{i;j+1} = \gamma_i$ and $u_{i; j+1} = Z_j w_i$
\ENDFOR
\end{algorithmic}
\end{algorithm}

\medskip

If Algorithm~\ref{alg:psdid} does not breakdown, 
i.e., the matrices $Z_j$ on line 5 are full column rank for all $j$, 
then a sequence of approximate eigenpairs 
$\{(\lambda_{i;j},u_{i;j})\}_j$ are produced.
The following proposition gives basic properties of the sequence. 
In particular, if the initial vector $u_{i;0}$ does not satisfy
the assumption~\eqref{eq:jassume}, the first approximate vector 
$u_{i;1}$ computed by Algorithm~\ref{alg:psdid} will suffice. 

\begin{proposition} \label{prop:basicproperties}   
If $Z_j$ is of full column rank, then
\begin{itemize}
\item[(a)] $U^H_{i-1}S u_{i;j+1} = 0$.
\item[(b)] $\|u_{i;j+1}\|_S=1$.
\item[(c)] $\lambda_{i;j+1} \geq \lambda_i$.
\item[(d)] $\lambda_{i;j+1} \le \lambda_{i;j}$.
\end{itemize}
\end{proposition}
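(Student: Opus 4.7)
The plan is to exploit the structural observation that the first $i-1$ columns of $Z_j = [U_{i-1}\; u_{i;j}\; p_{i;j}]$ are already eigenvectors of $(H,S)$. Writing $e_k$ for the $k$th standard basis vector in $\mathbb{C}^{i+1}$, the identity $Z_je_k = u_k$ for $k\le i-1$ together with $HU_{i-1}=SU_{i-1}\Lambda_{i-1}$ and the normalization $U_{i-1}^HSU_{i-1}=I_{i-1}$ immediately gives $H_R e_k = \lambda_k S_R e_k$. Hence the first $i-1$ eigenpairs of the reduced pencil $(H_R,S_R)$ are precisely $(\lambda_k,e_k)$ for $k=1,\ldots,i-1$. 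This single observation will drive every part of the proposition.

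For part (a), I would use that eigenvectors of a Hermitian-definite pencil corresponding to distinct eigenvalues are $S_R$-orthogonal. When $\gamma_i>\lambda_{i-1}$, this immediately yields $e_k^H S_R w_i = 0$ for $k\le i-1$, which rewrites as $u_k^HSu_{i;j+1}=0$ since $e_k^H S_R w_i = (Z_je_k)^H S(Z_j w_i) = u_k^H S u_{i;j+1}$. When $\gamma_i$ coincides with one of the preceding eigenvalues, $w_i$ is non-unique inside the corresponding eigenspace, and I would remove the ambiguity by an $S_R$-Gram--Schmidt step within that eigenspace, selecting $w_i$ so that it is $S_R$-orthogonal to $e_1,\ldots,e_{i-1}$. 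Part (b) is then immediate from $\|u_{i;j+1}\|_S^2 = w_i^HZ_j^HSZ_jw_i = w_i^HS_Rw_i = 1$.

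For part (c), observe that
\[
\lambda_{i;j+1} = \gamma_i = \frac{w_i^HH_Rw_i}{w_i^HS_Rw_i} = \rho(u_{i;j+1}),
\]
so combining with (a) and the variational characterization~\eqref{eq:evew-i} gives $\lambda_{i;j+1}\ge\lambda_i$. For part (d), the goal is to show the Rayleigh quotient does not increase from $u_{i;j}$ to $u_{i;j+1}$. Since $u_{i;j}=Z_je_i$ and $e_i$ is $S_R$-orthogonal to $e_1,\ldots,e_{i-1}$ (this is exactly the assumption $U_{i-1}^HSu_{i;j}=0$ from~\eqref{eq:jassume}), applying the min-max principle to the reduced pencil on the $S_R$-orthogonal complement of $\subspan\{e_1,\ldots,e_{i-1}\}$ yields $\gamma_i \le e_i^H H_R e_i / e_i^H S_R e_i = \rho(u_{i;j}) = \lambda_{i;j}$.

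The only delicate point, and the main (minor) obstacle, is the degenerate case in (a) when $\gamma_i$ is not strictly larger than the preceding eigenvalues; without specifying how $w_i$ is selected within the eigenspace, the statement $U_{i-1}^H Su_{i;j+1}=0$ need not hold verbatim. Once this choice is fixed, every assertion reduces to the projected pencil together with standard variational principles, and the argument goes through without further subtleties.
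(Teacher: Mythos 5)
Your argument is correct and takes essentially the same route the paper intends: identify $(\lambda_k,e_k)$, $k\le i-1$, as the leading eigenpairs of the reduced pencil $(H_R,S_R)$, then invoke $S_R$-orthogonality of Ritz vectors and the variational characterization; the paper leaves (a)--(b) as a ``straightforward calculation'' and your argument supplies precisely that. One small simplification: the degenerate case $\gamma_i=\lambda_{i-1}$ cannot occur, since Cauchy interlacing for the Galerkin projection already gives $\gamma_i\ge\lambda_i$ (independently of part (c), so there is no circularity), and the paper's standing assumption $\lambda_{i-1}<\lambda_i$ then forces $\gamma_i>\lambda_{i-1}$, so your Gram--Schmidt adjustment is unnecessary.
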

\begin{proof} 
Results (a) and (b) are verified by straightforward calculation. 
The result (c) follows from the inequality
\begin{equation*}%  \label{eq:lambdagreater}
\lambda_{i;j+1}=\rho({u}_{i;j+1})\ge \min_{U_{i-1}^HSz=0}\rho(z)=\lambda_i.
\end{equation*}
Finally, the result (d) follows from the facts that 
%By the definition~\eqref{eq:liju} of $\lambda_{i;j+1}$, we have
\[
\lambda_{i;j+1}=\lambda_i(H_R, S_R)
=\min_{U_{i-1}^H S Z_j w=0 }\rho(Z_j w)
\le \rho(Z_j w)|_{w=e_i}=\rho(u_{i;j})=\lambda_{i;j},
\]
where $e_i$ is the $i$th column vector of identity matrix of order $i+1$.
%and by \eqref{eq:det2}, $\lambda_{i;j+1}\ne \lambda_{i;j}$.
\end{proof}

The following proposition shows that with the proper choice of 
the preconditioner $K_{i;j}$, the basis matrix 
$Z_j=[U_{i-1}\; u_{i;j} \; p_{i;j}]$
is of full column rank,
which implies that Algorithm~\ref{alg:psdid} does not breakdown.

\begin{proposition}\label{lem:fr}
If $r_{i;j}\ne 0$ and $K_{i;j}$ is chosen such that
\begin{equation}\label{k-con}
K^{c}_{i;j} :=(U_{i-1}^{c})^H S K_{i;j}SU_{i-1}^c>0,\end{equation}
then the basis matrix $Z_j$ is of full column rank. Here
$U_{i-1}^c$ is complementary eigenvector matrix of
$U_{i-1}$, i.e., $U_{i-1}^c= [u_{i}\; \cdots \;  u_n]$.
\end{proposition}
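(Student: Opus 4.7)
The plan is to show that the third column $p_{i;j}$ does not lie in the span of the first $i$ columns $[U_{i-1}\; u_{i;j}]$. Since $U_{i-1}^HS U_{i-1}=I$, $U_{i-1}^H S u_{i;j}=0$, and $\|u_{i;j}\|_S=1$, the block $[U_{i-1}\; u_{i;j}]$ is already $S$-orthonormal and hence of full column rank. So failure of $Z_j$ to be full column rank reduces to the single possibility $p_{i;j}\in\subspan([U_{i-1}\; u_{i;j}])$, which is the statement I will rule out by contradiction.

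To work with useful coordinates, I would let $U=[U_{i-1}\; U_{i-1}^c]$ so that $U^HSU=I$ and $HU=SU\Lambda$, and write $\Lambda_c=\diag(\lambda_i,\dots,\lambda_n)$. Because $u_{i;j}$ is $S$-orthogonal to $U_{i-1}$ and $S$-unit, there is a Euclidean-unit vector $t$ with $u_{i;j}=U_{i-1}^c t$. A short calculation then produces
\[
r_{i;j}=(H-\lambda_{i;j}S)U_{i-1}^c t = SU_{i-1}^c(\Lambda_c-\lambda_{i;j}I)t =: SU_{i-1}^c s,
\]
and the hypothesis $r_{i;j}\ne 0$ forces $s\ne 0$. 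The lever for the whole argument is the identity $t^H s = t^H\Lambda_c t-\lambda_{i;j}=\rho(u_{i;j})-\lambda_{i;j}=0$, i.e., $t$ and $s$ are Euclidean-orthogonal.

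Next I would assume $p_{i;j}=U_{i-1}a+b\,u_{i;j}$ for some $a$ and $b$, and apply $(U_{i-1}^c)^HS$ on the left. Using $(U_{i-1}^c)^HSU_{i-1}=0$ and $(U_{i-1}^c)^HSu_{i;j}=t$, this expression equals $bt$. Using instead the defining formula $p_{i;j}=-K_{i;j}r_{i;j}=-K_{i;j}SU_{i-1}^cs$, the same quantity equals $-K^c_{i;j}s$. Equating gives $K^c_{i;j}s=\alpha t$ for some scalar $\alpha$.

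The hard part---and the only place the hypothesis $K^c_{i;j}>0$ is used essentially---is turning this parallelism into a contradiction. Pairing with $s$ yields $s^HK^c_{i;j}s=\alpha\,s^Ht=0$ by the orthogonality identity $t^Hs=0$, whereas $K^c_{i;j}>0$ and $s\ne 0$ force $s^HK^c_{i;j}s>0$. This contradiction shows $p_{i;j}\notin\subspan([U_{i-1}\; u_{i;j}])$, and therefore $Z_j$ has full column rank.
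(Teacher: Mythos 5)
Your proof is correct, and it takes a genuinely different route from the paper's. The paper works with the projected pencil: it shows $Z_j$ has full column rank by proving $\det\bigl(Z_j^H(H-\lambda_{i;j}S)Z_j\bigr)\ne 0$, using the block $L$-factorization \eqref{eq:hr} to split the determinant into a deflation factor $\det(\Lambda_{i-1}-\lambda_{i;j}I)$ (handled via Proposition~\ref{prop:basicproperties}(c), which requires a small extra argument that $r_{i;j}\ne 0$ forces $\lambda_{i;j}>\lambda_i$) times the $2\times2$ determinant $\det(H_\bot-\lambda_{i;j}S_\bot) = -|p_\bot^H r_{i;j}|^2$, the latter being nonzero by $K^c_{i;j}>0$. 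You instead argue directly on linear independence: since $[U_{i-1}\;u_{i;j}]$ is $S$-orthonormal, it suffices to rule out $p_{i;j}\in\subspan([U_{i-1}\;u_{i;j}])$, and you do so by the contradiction $s^H K^c_{i;j}s=\alpha\,s^Ht=0$. The two proofs exploit the same underlying algebra — your identity $t^Hs=0$ is exactly what makes the $(1,1)$ entry of $H_\bot-\lambda_{i;j}S_\bot$ vanish in the paper's $2\times 2$ determinant, and $s^HK^c_{i;j}s\ne 0$ is the nonvanishing of $p_\bot^Hr_{i;j}$ — but your packaging is shorter, avoids the determinant/Schur-complement machinery entirely, and bypasses the intermediate step establishing $\lambda_{i;j}>\lambda_i$. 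The paper's factorization \eqref{eq:hr} has the advantage of being reused later (in the proof of Lemma~\ref{thm:lam}), which is likely why the authors set it up this way.
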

\begin{proof}
We prove  that $Z_j$ is of full column rank by showing that
\[ 
\det(H_{R}-\lambda_{i;j}S_{R}) = 
\det( Z^H_j( H  -\lambda_{i;j}S)Z_j ) \neq 0.
\]
First, it can be verified that the projected matrix pair
$(H_R, S_R)$ can be factorized as follows:
\begin{equation}  \label{eq:hr}
(H_R, S_R)  = L^{-1}\left( \begin{bmatrix} 
\Lambda_{i-1} & 0\\ 
0&  H_{\bot}\end{bmatrix}, 
\begin{bmatrix} 
I_{i-1} & 0\\ 0 & S_{\bot}\end{bmatrix}
\right) L^{-H},
\end{equation}
where 
\[
L=\begin{bmatrix}
I_{i-1} & 0 & 0\\
0      & 1 & 0\\
-p_{i;j}^H SU_{i-1} & 0 & 1
\end{bmatrix}, \quad   
H_{\bot} = Z_{\bot}^H HZ_{\bot}, \quad 
S_{\bot} = Z_{\bot}^H SZ_{\bot}, 
\]
and 
$Z_{\bot}=[u_{i;j} \; p_{\bot}]$ and
$p_{\bot}=U_{i-1}^c(U_{i-1}^c)^HSp_{i;j}$.
Consequently, we have
\begin{align}\label{eq:dethrsr}
\det(H_R-\lambda_{i;j} S_R)=\det(\Lambda_{i-1}-\lambda_{i;j} I) 
\det(H_{\bot}-\lambda_{i;j}S_{\bot}). 
\end{align}
By Proposition~\ref{prop:basicproperties}(c),
we have $\lambda_{i;j}\geq \lambda_i$. 
Since $r_{i;j}\ne 0$, $\lambda_{i;j}>\lambda_i$.
Hence, we conclude that 
\begin{equation}  \label{eq:det1} 
\det(\Lambda_{i-1}-\lambda_{i;j} I)\ne 0. 
\end{equation} 
Next we show that $\det(H_{\bot}-\lambda_{i;j}S_{\bot}) \neq 0$.
We first note that since $U_{i-1}^HSu_{i;j}=0$, 
there exists a vector $a$ such that $u_{i;j}=U_{i-1}^c a$.
Then it follows that
\begin{align}\label{eq:rij}
r_{i;j}%=(H-\lambda_{i;j}S)u_{i;j}
=(H-\lambda_{i;j}S)U_{i-1}^ca
=SU_{i-1}^c(\Lambda_{i-1}^c-\lambda_{i;j}I)a
=SU_{i-1}^c(U_{i-1}^c)^Hr_{i;j},
\end{align}
where $\Lambda_{i-1}^c = \diag(\lambda_{i},\ldots, \lambda_n)$.
Note that $(U_{i-1}^c)^Hr_{i;j}\ne 0$ since $r_{i;j}\ne 0$.
Furthermore, using \eqref{eq:rij} and \eqref{k-con}, we have
\begin{align}
\det(H_{\bot}-\lambda_{i;j}S_{\bot})
%=&\det\Big(\begin{bmatrix}  \lambda_{i;j} & u_{i;j}^H Hp_i^{\bot}\\ (p_i^{\bot})^H  H u_{i;j}& (p_i^{(\bot)})^H H p_i^{\bot}\end{bmatrix} -
%\lambda_{i;j} \begin{bmatrix}   1 & u_{i;j}^H Sp_i^{\bot}\\ (p_i^{\bot})^H  S u_{i;j}& (p_i^{(\bot)})^H S p_i^{\bot}\end{bmatrix}\Big)\\
& =\det\begin{bmatrix}  0 & r_{i;j}^H p_{\bot}\\ p_{\bot}^H  r_{i;j}& p_{\bot}^H (H-\lambda_{i;j}S) p_{\bot}\end{bmatrix}\notag\\
& = -|p_{\bot}^H  r_{i;j}|^2\notag\\
%& = -|r_{i;j}^H K_{i;j} P_{i-1}^H r_{i;j}\notag|\\
& = -|r_{i;j}^H U_{i-1}^c (U_{i-1}^c)^H S K_{i;j} SU_{i-1}^c (U_{i-1}^c)^H r_{i;j}|^2\notag\\
& = -|r_{i;j}^H U_{i-1}^c {K}^c_{i;j} (U_{i-1}^c)^H r_{i;j}| < 0. \label{eq:det2} 
\end{align}

By \eqref{eq:dethrsr}, \eqref{eq:det1} and \eqref{eq:det2},  
we conclude that $H_{R}-\lambda_{i;j}S_{R}$ is nonsingular, 
which implies that $Z_j$ is of full column rank. 
\end{proof}

\begin{definition} \label{def:epdp} 
A preconditioner $K_{i;j}$ satisfying the condition \eqref{k-con} is
called an {\rm effectively positive definite preconditioner}.
\end{definition} 

We note that an effectively positive definite preconditioner $K_{i;j}$ with $i>1$ 
is not necessarily to be symmetric positive definite.
For example, for any $\lambda_1<\sigma <\lambda_i$ and 
$\sigma$ is not an eigenvalue of $(H,S)$, $K_{i;j}=(H-\sigma S)^{-1}$ is
effectively positive definite, although $K_{i;j}$ is indefinite.
%Note that $K_{1;j}$ must be symmetric positive definite.

\medskip 

If  the preconditioner $K_{i;j}$ is chosen such that      
the search vector $p_{i;j}=-K_{i;j}r_{i;j}$ satisfies
\begin{equation}\label{ideal-p}
U^HS({u}_{i;j}+p_{i;j})=\xi=(\xi_1,\xi_2,\dots,\xi_n)^H\quad
\mbox{with $\xi_i\ne 0$ and $\xi_j=0$ for $j>i$},
\end{equation}
then
\begin{align}
\lambda_{i;j+1}
& = \min_{U_{i-1}^H S Z_j w=0}\rho(Z_j w) \notag \\ 
& =\min_w \rho(U_{i-1}^c (U_{i-1}^c)^H SZ_jw)\notag \\
& =\min_v\rho(U_{i-1}^c (U_{i-1}^c)^H S[u_{i;j} \ p_{i;j}]v) \notag \\
& \le  \rho(U_{i-1}^c (U_{i-1}^c)^H S[u_{i;j} \ p_{i;j}]v)|_{v=[1 \; 1]^\top} \notag \\ 
& =\rho(U_{i-1}^c (U_{i-1}^c)^H S(u_{i;j}+ p_{i;j}))\notag\\
& =\rho(U_{i-1}^c (U_{i-1}^c)^H SU\xi) \notag \\ 
& =\rho(\xi_i u_i ) =\lambda_i. \label{eq:lambdaless} 
\end{align}
Therefore, combining the inequality \eqref{eq:lambdaless} 
and Proposition~\ref{prop:basicproperties}(c),
we have $\lambda_{i;j+1}=\lambda_i$.
In this case, we refer to $p_{i;j}$ satisfying the equation \eqref{ideal-p}
as an {\em ideal search direction}.
The notion of an ideal search direction not only helps assessing the quality 
of a preconditioned search direction,  but also tells the desired property 
for the solution of the preconditioning equation $p_{i;j}=-K_{i;j} r_{i;j}$.

\section{Convergence analysis}\label{sec:analysis}
In this section, we prove the convergence of the \psdid method and 
derive a nonasymptotic estimate of the convergence rate.
For brevity, we assume that for the desired 
$i$th eigenvalue $\lambda_i$, it satisfies
$\lambda_{i-1} < \lambda_{i} < \lambda_{i+1}$.
Otherwise by replacing $\lambda_{i+1}$ by the smallest eigenvalue
of $(H,S)$ which is larger than $\lambda_i$, all results in this section
still hold, the proofs are similar. 
% but tedious.

\subsection{Convergence results}\label{sec:convergence}

Assume that the preconditioner $K_{i;j}$ is effectively positive definite,
then by Proposition~\ref{prop:basicproperties}(d) and \eqref{eq:det2},
we have
that $\lambda_{i;j+1}$ is strictly less than $\lambda_{i;j}$,
\begin{equation}\label{lamstrictless}
\lambda_{i;j+1}<\lambda_{i;j}.
\end{equation}
Furthermore, by Proposition~\ref{prop:basicproperties}(c) 
and \eqref{lamstrictless}, the approximate eigenvalue sequence 
$\{\lambda_{i;j}\}_j$ is a monotonically 
decreasing and is bounded below by $\lambda_i$, i.e.,
\begin{equation} \label{eq:lambdadecrease} 
\lambda_{i;0} > \lambda_{i;1} > \cdots > \lambda_{i;j}>\lambda_{i;j+1}
> \dots \ge \lambda_i.
\end{equation} 
Therefore, the sequence $\{\lambda_{i;j}\}_j$ must converge. 
Does it converge to the $i$th eigenvalue $\lambda_i$ of $(H,S)$? 
How about the corresponding $\{u_{i;j}\}_j$?
We will answer these questions in this subsection. 
First, we give the following lemma to quantify the 
difference between two consecutive approximates 
$\lambda_{i;j}$ and $\lambda_{i;j+1}$ of $\lambda_i$.

\begin{lemma}\label{thm:lam}
If $r_{i;j}\ne 0$ and the preconditioner $K_{i;j}$ is 
effectively positive definite, then
\begin{equation}\label{eq:lamdif}
\lambda_{i;j}-\lambda_{i;j+1} \ge \sqrt{g^2+\phi^2} - g,
\end{equation}
where 
$g=(\lambda_n-\lambda_i)/2$ and 
$\phi ={\|r_{i;j}\|_{S^{-1}} }/{\kappa(K^c_{i;j})}$,
$\kappa(K^c_{i;j})$ is the condition number of ${K}^c_{i;j}$ 
defined in \eqref{k-con}.
\end{lemma}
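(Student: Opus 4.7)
The plan is to reduce the analysis to a $2\times 2$ Rayleigh--Ritz subproblem, use the closed-form formula for the smaller eigenvalue of a $2\times 2$ Hermitian matrix, and then bound the two parameters appearing in that formula via the spectral properties of $K^c_{i;j}$. By the factorization \eqref{eq:hr} together with $\lambda_{i;j+1}\ge\lambda_i>\lambda_{i-1}$ from Proposition~\ref{prop:basicproperties}(c), $\lambda_{i;j+1}$ is exactly the smaller eigenvalue of the $2\times 2$ pencil $(H_\bot,S_\bot)$ generated by the basis $Z_\bot=[u_{i;j},p_\bot]$, with $p_\bot=U_{i-1}^c(U_{i-1}^c)^HSp_{i;j}$. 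Setting $c=u_{i;j}^HSp_\bot$, $q=p_\bot-c\,u_{i;j}$ and $\tilde q=q/\|q\|_S$, the pair $\{u_{i;j},\tilde q\}$ is $S$-orthonormal, so $\lambda_{i;j+1}$ is the smaller eigenvalue of the Hermitian $2\times 2$ matrix with diagonal entries $\lambda_{i;j}$ and $a_{22}:=\tilde q^HH\tilde q$ and off-diagonal entry $a_{12}:=u_{i;j}^HH\tilde q$. The closed form gives
\begin{equation*}
\lambda_{i;j}-\lambda_{i;j+1}=\sqrt{(g')^2+|a_{12}|^2}-g',\qquad g':=(a_{22}-\lambda_{i;j})/2.
\end{equation*}

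Next I would bound $g'$ and $|a_{12}|$. Since both $u_{i;j}$ and $p_\bot$ lie in $\text{range}(U_{i-1}^c)$, so does $\tilde q$; writing $\tilde q=U_{i-1}^cy$ with $\|y\|=1$ shows $a_{22}=y^H\Lambda_{i-1}^cy\le\lambda_n$, and $\lambda_{i;j}\ge\lambda_i$ then yields $g'\le(\lambda_n-\lambda_i)/2=g$. For $a_{12}$, the $S$-orthogonality of $u_{i;j}$ and $\tilde q$, combined with $r_{i;j}^Hu_{i;j}=0$, reduces $a_{12}$ to $r_{i;j}^Hp_\bot/\|q\|_S$. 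Writing $\hat r=(U_{i-1}^c)^Hr_{i;j}$ and using \eqref{eq:rij}, a direct computation (cf.\ the derivation of \eqref{eq:det2}) yields $|r_{i;j}^Hp_\bot|=\hat r^HK^c_{i;j}\hat r$, $\|p_\bot\|_S^2=\|K^c_{i;j}\hat r\|^2$, and $\|r_{i;j}\|_{S^{-1}}=\|\hat r\|$. The trivial inequality $\|q\|_S\le\|p_\bot\|_S$ together with the spectral bounds for the Hermitian positive-definite matrix $K^c_{i;j}$ then gives
\begin{equation*}
|a_{12}|\ge\frac{\lambda_{\min}(K^c_{i;j})\|\hat r\|^2}{\lambda_{\max}(K^c_{i;j})\|\hat r\|}=\frac{\|r_{i;j}\|_{S^{-1}}}{\kappa(K^c_{i;j})}=\phi.
\end{equation*}

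Finally, the function $F(s,t)=\sqrt{s^2+t^2}-s$ is decreasing in $s$ and increasing in $t\ge 0$, so combining $g'\le g$ and $|a_{12}|\ge\phi$ yields the desired bound $\lambda_{i;j}-\lambda_{i;j+1}\ge\sqrt{g^2+\phi^2}-g$. I expect the main technical difficulty to lie in the lower bound for $|a_{12}|$: the condition-number factor $\kappa(K^c_{i;j})$ enters precisely because the lower estimate on $|r_{i;j}^Hp_\bot|$ uses $\lambda_{\min}(K^c_{i;j})$ while the upper estimate on $\|p_\bot\|_S$ uses $\lambda_{\max}(K^c_{i;j})$, and one has to keep track of the $S$-geometry through the reduction to $\hat r$. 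The remaining steps are standard $2\times 2$ Rayleigh--Ritz computations.
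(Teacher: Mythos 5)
Your proposal is correct and follows essentially the same route as the paper's proof: reduce to the $2\times 2$ pencil $(H_\bot,S_\bot)$ via the block factorization \eqref{eq:hr}, obtain the explicit expression for $\lambda_{i;j}-\lambda_{i;j+1}$ as the positive root of a quadratic (your $\tilde q$ is exactly the paper's $\hat p$ normalized in the $S$-norm, so your $g'$ and $|a_{12}|$ coincide with the paper's $(\rho(\hat p)-\lambda_{i;j})/2$ and $|u_{i;j}^HH\hat p|/\sqrt{\hat p^HS\hat p}$), bound the diagonal shift by $g$ using $\lambda_i\le\lambda_{i;j},\rho(\hat p)\le\lambda_n$, and bound the off-diagonal term below by $\phi$ using $\lambda_{\min}(K^c_{i;j})$ for the numerator $\hat r^HK^c_{i;j}\hat r$ and $\lambda_{\max}(K^c_{i;j})$ for $\|p_\bot\|_S=\|K^c_{i;j}\hat r\|$. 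The only cosmetic difference is that you pass to an $S$-orthonormal basis and invoke the closed-form $2\times2$ eigenvalue formula rather than writing out the characteristic polynomial with the $\hat p^HS\hat p$ factor, and you use the one-sided bound $g'\le g$ (which suffices, since $F(s,t)=\sqrt{s^2+t^2}-s$ is decreasing in $s$ on all of $\mathbb R$) in place of the paper's $|g'|\le g$.
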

\begin{proof}
Let $H_{\bot}$ and $S_{\bot}$ be the matrices defined in \eqref{eq:hr}.  
By $S$-orthogonalizing $p_{i;j}$ against $[U_{i-1} \; u_{i;j}]$, 
the resulting vector $\hat{p}=(I-u_{i;j}u_{i;j}^HS)P_{i-1}p_{i;j}$ 
must be nonzero since $Z_j=[U_{i-1} \; u_{i;j}\; p_{i;j}]$ is of full 
column rank (Lemma~\ref{lem:fr}).
Therefore,  it holds that $\hat{p}^HS\hat{p}> 0$.
By straightforward calculations, we have 
\begin{align}
\det(H_{\bot}-\mu S_{\bot})
& = \det\left(
\begin{bmatrix}\lambda_{i;j}& u_{i;j}^HH\hat{p}\\ \hat{p}^HHu_{i;j} & \hat{p}^HH\hat{p}\end{bmatrix} -
\mu\begin{bmatrix} 1 & 0\\ 0 & \hat{p}^HS\hat{p}\end{bmatrix}\right) \notag \\
& = \hat{p}^HS\hat{p} (\lambda_{i;j}-\mu)
     (\rho(\hat{p})-\mu)-|u_{i;j}^HH\hat{p}|^2 \notag \\
& =  \hat{p}^HS\hat{p} \left[ (\lambda_{i;j}-\mu)^2 +
(\rho(\hat{p})-\lambda_{i;j})(\lambda_{i;j}-\mu) 
-\frac{ |u_{i;j}^HH\hat{p}|^2}{\hat{p}^HS\hat{p}}\right]. \label{eq:detHS} 
\end{align}
%Note that $\hat{p}^HS\hat{p}> 0$ since   
%\marginpar{\tiny the sentence ``since ...'' does not make sense. 
%$S$ is spd} 
%``it?'' is the Schur complementary of the $(1,1)$ element of 
%the symmetric positive definite matrix $S_{\bot}$. 

By the definition of $\lambda_{i;j+1}$ in~\eqref{eq:liju},
the identity~\eqref{eq:dethrsr}, 
we know that $\lambda_{i;j+1}$ is the smaller 
root of the quadratic polynomial \eqref{eq:detHS} of $\mu$.
In addition, by \eqref{eq:lambdadecrease}, 
we know that $\lambda_{i;j}-\lambda_{i;j+1}$ is positive.
Therefore $\lambda_{i;j}-\lambda_{i;j+1}$ is 
the positive root of the following quadratic equation in $t$:
\begin{equation*}
t^2 + (\rho(\hat{p})-\lambda_{i;j}) t 
- \frac{|u_{i;j}^HH\hat{p}|^2}{ \hat{p}^HS\hat{p}}=0.
\end{equation*}
Then it follows that 
\begin{align}\label{eq:tplus}
\lambda_{i;j}-\lambda_{i;j+1}
=-\frac{\rho(\hat{p})-\lambda_{i;j}}{2}
+\sqrt{\left(\frac{\rho(\hat{p})-\lambda_{i;j}}{2}\right)^2
+\frac{|u_{i;j}^HH\hat{p}|^2}{ \hat{p}^HS\hat{p}}}.
\end{align}
In what follows,  we give the estimates of 
the quantities $|\rho(\hat{p})-\lambda_{i;j}|$, 
$|u_{i;j}^HH\hat{p}|^2$ and $\hat{p}^HS\hat{p}$, respectively.

For the quantity $|\rho(\hat{p})-\lambda_{i;j}|$, 
using the fact that for any nonzero $z$ satisfying $U_{i-1}^HSz=0$,
it holds $\lambda_i\le\rho(z)\le\lambda_n$, then 
using $U_{i-1}^HS\hat{p}=0$ and $U_{i-1}^H S u_{i;j}=0$, we have  
\begin{equation} \label{eq:rhoest}  
0\le |\rho(\hat{p})-\lambda_{i;j}|\le \lambda_n-\lambda_i= 2g.
\end{equation}
%due to the fact that both $\rho(\hat{p})$ and $\lambda_{i;j}$ 
%lie between $\lambda_i$ and $\lambda_n$.

For the quantity $|u_{i;j}^HH\hat{p}|^2$, we have
\begin{subequations}
\begin{align}
|u_{i;j}^HH\hat{p}|
%& = |u_{i;j}^HH  (I-u_{i;j}u_{i;j}^HS)P_{i-1}p_{i;j} | \notag \\ 
& =|u_{i;j}^HH(I-u_{i;j}u_{i;j}^HS)U_{i-1}^c(U_{i-1}^c)^HSK_{i;j}r_{i;j}| 
           \label{uhp-1}\\
&=|\big[u_{i;j}^HH-\lambda_{i;j}u_{i;j}^HS\big]  U_{i-1}^c \big[(U_{i-1}^c)^HSK_{i;j}  SU_{i-1}^c \big] (U_{i-1}^c)^H  r_{i;j}|\label{uhp-2}
\\
&=|r_{i;j}^HU_{i-1}^c K^c_{i;j} (U_{i-1}^c)^Hr_{i;j}|\label{uhp-3}\\
&\ge  \lambda_{\min}(K^c_{i;j})\|(U_{i-1}^c)^Hr_{i;j}\|^2\label{uhp-4}\\
& = \lambda_{\min}(K^c_{i;j})\|r_{i;j}\|_{S^{-1}}^2\label{uhp-5},
\end{align}
\end{subequations}
where 
\eqref{uhp-1} uses  the definition of $\hat{p}$  and \eqref{eq:pij},
\eqref{uhp-2} uses the fact that 
$r_{i;j}=SU_{i-1}^c (U_{i-1}^c)^H r_{i;j}$, %\eqref{eq:rij},
\eqref{uhp-3} and \eqref{uhp-4} use 
the definition of $K^c_{i;j}$ in \eqref{k-con} and the assumption 
that $K^c_{i;j}$ is symmetric positive definite, and 
\eqref{uhp-5} is based on the following calculations: 
\begin{align*}
\|(U_{i-1}^c)^Hr_{i;j}\|^2
&=r_{i;j}^H U_{i-1}^c(U_{i-1}^c)^Hr_{i;j} &\\
&=r_{i;j}^H U_{i-1}^c(U_{i-1}^c)^Hr_{i;j}+r_{i;j}^H U_{i-1}U_{i-1}^Hr_{i;j} &  (U_{i-1}^Hr_{i;j}=0)\\
&=r_{i;j}^H U U^H r_{i;j}&\\
&=r_{i;j}^H S^{-1}r_{i;j} & (UU^H=S^{-1})\\
&= \|r_{i;j}\|_{S^{-1}}^2. &
\end{align*}

For the quantity $\hat{p}^HS\hat{p}$, we have
%\begin{subequations}
\begin{align}
\hat{p}^HS\hat{p} 
& = (r_{i;j})^HK_{i;j} SU_{i-1}^c   \big[ (U_{i-1}^c)^H(I-Su_{i;j}u_{i;j}^H)
S (I-u_{i;j}u_{i;j}^HS) U_{i-1}^c \big]  (U_{i-1}^c)^H S K_{i;j} r_{i;j}\notag\\ 
& \le\| (U_{i-1}^c)^H(I-Su_{i;j}u_{i;j}^H)
S (I-u_{i;j}u_{i;j}^HS) U_{i-1}^c \| \|(U_{i-1}^c)^H S K_{i;j} r_{i;j} \|^2\notag\\
& \le \|(U_{i-1}^c)^H S K_{i;j} r_{i;j} \|^2 \notag \\ 
& = \|K^c_{i;j} (U_i^{c})^Hr_{i;j}\|^2 \notag\\ 
& \le \lambda_{\max}(K^c_{i;j})^2\|(U_{i-1}^c)^Hr_{i;j}\|^2\notag\\
& = \lambda_{\max}(K^c_{i;j})^2\|r_{i;j}\|_{S^{-1}}^2, \label{php-7}
\end{align}
%\end{subequations}
where  %\eqref{php-1} uses  \eqref{eq:pij},
the second inequality use the fact that
\begin{align*}
\| &(U_{i-1}^c)^H(I-Su_{i;j}u_{i;j}^H) S (I-u_{i;j}u_{i;j}^HS) U_{i-1}^c \|\\
 = \| &(U_{i-1}^c)^HS^{\half}(I-S^\half u_{i;j}u_{i;j}^H S^\half)
(I-S^\half u_{i;j}u_{i;j}^HS^\half) S^\half U_{i-1}^c \| \\
\le \| &(I-S^\half u_{i;j}u_{i;j}^H S^\half)\|^2 \|S^\half U_{i-1}^c \|^2 \le 1.
\end{align*}

Finally, by \eqref{eq:rhoest}, \eqref{uhp-5} and 
\eqref{php-7}, it follows from \eqref{eq:tplus} that
\begin{align*}
\lambda_{i;j}-\lambda_{i;j+1}
\ge& -\frac{|\rho(\hat{p})-\lambda_{i;j}|}{2}+\sqrt{\bigg(\frac{\rho(\hat{p})-\lambda_{i;j}}{2}\bigg)^2 + \frac{|u_{i;j}^HH\hat{p}|^2}{ \hat{p}^HS\hat{p}}}\\
\ge & -\frac{\lambda_n-\lambda_i}{2} + \sqrt{\bigg(\frac{\lambda_n-\lambda_i}{2}\bigg)^2+\frac{\|r_{i;j}\|^2_{S^{-1}}}{\kappa^2(K^c_{i;j})}}\\
= &-g+\sqrt{g^2+\phi^2}.
\end{align*}
This completes the proof.
\end{proof}

We note that in \cite[Chap.7]{fadeev1963computational}, for the 
steepest descent method to compute the largest eigenvalue 
$\lambda_n$ of a Hermitian matrix, it shows that
\begin{equation*}%\label{ineq:fadd}
\lambda_{n;j+1}-\lambda_{n;j}\ge
\frac{\|r_{n;j}\|^2}{\lambda_n-\lambda_1}. 
\end{equation*}
Then it is established that  $\lambda_{n;j}$ converges to $\lambda_n$, and
$u_{n;j}$ converges to $u_n$ directionally.
Lemma~\ref{thm:lam} and the following theorem %Theorem~\ref{thm:psd2} 
are generalizations that are not limited to the largest eigenpair,
and include the usage of flexible preconditioners. 

\begin{theorem}\label{thm:psd2}
If the initial estimate eigenvalue 
$\lambda_{i;0}$ satisfying $\lambda_i < \lambda_{i;0}<\lambda_{i+1}$, and
the flexible preconditioners $K_{i;j}$ are an effectively positive definite 
for all $j$ and $\mbox{sup}_j \kappa(K^c_{i;j})=q<\infty$, 
then the sequence $\{(\lambda_{i;j}, u_{i;j})\}_j$ generated by the 
\psdid method converges to the desired pair $(\lambda_{i}, u_{i})$, i.e.,
\begin{itemize} 
\item[(a)]  $\lim_{j\rightarrow \infty}\lambda_{i;j}=\lambda_i$.
\item[(b)] $\lim_{j\rightarrow \infty}\|r_{i;j}\|_{S^{-1}}=0$, namely $u_{i;j}$ 
converges to $u_i$ directionally.
\end{itemize} 
\end{theorem}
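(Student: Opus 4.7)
The plan is to leverage Lemma~\ref{thm:lam} together with the monotone decrease established in Proposition~\ref{prop:basicproperties}(d) and \eqref{lamstrictless}: first extract decay of the residual, then translate residual decay into eigenvalue and directional eigenvector convergence via an eigen-expansion of $u_{i;j}$ in the basis $\{u_i,\ldots,u_n\}$.

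First I would argue that $\lambda_i\le\lambda_{i;j}\le\lambda_{i;0}<\lambda_{i+1}$ for every $j$, combining the lower bound in Proposition~\ref{prop:basicproperties}(c) with the monotone descent~\eqref{eq:lambdadecrease} and the hypothesis on $\lambda_{i;0}$. Hence $\{\lambda_{i;j}\}$ converges to some limit $\lambda^*\in[\lambda_i,\lambda_{i;0}]$ and the telescoped differences $\sum_j(\lambda_{i;j}-\lambda_{i;j+1})$ are finite, so $\lambda_{i;j}-\lambda_{i;j+1}\to 0$. Feeding this into Lemma~\ref{thm:lam}, with the uniform bound $\kappa(K^c_{i;j})\le q$, forces $\sqrt{g^2+\phi_j^2}-g\to 0$ where $\phi_j=\|r_{i;j}\|_{S^{-1}}/\kappa(K^c_{i;j})$. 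Since $\phi\mapsto\sqrt{g^2+\phi^2}-g$ is strictly increasing and vanishes only at $\phi=0$, this yields $\phi_j\to 0$ and therefore $\|r_{i;j}\|_{S^{-1}}\to 0$, which is the residual half of (b).

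Next I would translate residual decay into convergence of $\lambda_{i;j}$ by expanding $u_{i;j}$ in the eigenbasis. Since $U_{i-1}^{H}Su_{i;j}=0$ and $\|u_{i;j}\|_S=1$, there exist coefficients $\alpha_{k}^{(j)}$ with $u_{i;j}=\sum_{k=i}^{n}\alpha_{k}^{(j)}u_{k}$ and $\sum_{k=i}^{n}|\alpha_{k}^{(j)}|^{2}=1$. A direct computation using $Hu_k=\lambda_k Su_k$ and $S$-orthonormality of $U$ gives
\begin{equation*}
\|r_{i;j}\|_{S^{-1}}^{2}=\sum_{k=i}^{n}|\alpha_{k}^{(j)}|^{2}(\lambda_{k}-\lambda_{i;j})^{2}.
\end{equation*}
For $k\ge i+1$ each term is bounded below by $|\alpha_{k}^{(j)}|^{2}(\lambda_{i+1}-\lambda_{i;0})^{2}$, a uniformly positive multiple of $|\alpha_{k}^{(j)}|^{2}$. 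Combined with $\|r_{i;j}\|_{S^{-1}}\to 0$ this forces $\sum_{k\ge i+1}|\alpha_{k}^{(j)}|^{2}\to 0$ and hence $|\alpha_{i}^{(j)}|\to 1$. Using $\lambda_{i;j}=\rho(u_{i;j})=\sum_{k=i}^{n}|\alpha_{k}^{(j)}|^{2}\lambda_{k}$ and boundedness of the $\lambda_k$, the contribution from $k=i$ dominates in the limit, yielding $\lambda_{i;j}\to\lambda_{i}$, which is (a); the same control on the $\alpha_k^{(j)}$ shows that $u_{i;j}$ converges to $u_i$ up to scalar factors of modulus tending to one, i.e., directionally, completing (b).

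The main obstacle, as I see it, is ensuring the monotone iterates never cross $\lambda_{i+1}$, so that the gap $\lambda_{i+1}-\lambda_{i;j}$ remains uniformly bounded away from zero in the eigen-expansion step; the hypothesis $\lambda_{i;0}<\lambda_{i+1}$ together with monotone descent is precisely what makes this work, and this is the one place where the hypothesis on the initial estimate is used in a nontrivial way. Once that is in hand, Lemma~\ref{thm:lam} does the heavy lifting for residual decay and the rest is a routine eigen-expansion. The degenerate case $r_{i;j}=0$ at some finite $j$ is handled separately: then $u_{i;j}$ is an exact eigenvector with eigenvalue $\lambda_{i;j}\in[\lambda_i,\lambda_{i+1})$, so necessarily $\lambda_{i;j}=\lambda_{i}$ and the algorithm terminates at the desired pair.
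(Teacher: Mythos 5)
Your proof is correct, and it reorganizes the argument in a genuinely different (and arguably cleaner) way than the paper. The paper proves (a) first, by contradiction: assuming the monotone limit $\tilde\lambda_i\neq\lambda_i$, it derives the lower bound $\|r_{i;j}\|_{S^{-1}}\ge\min_k|\lambda_k-\tilde\lambda_i|-(\lambda_{i;j}-\tilde\lambda_i)$, which for large $j$ gives a uniform positive floor on the residual; feeding this into Lemma~\ref{thm:lam} then yields a strictly positive lower bound on $\lambda_{i;j}-\lambda_{i;j+1}$, contradicting convergence. Only afterwards does the paper deduce (b), using $\lambda_{i;j}-\lambda_{i;j+1}\to 0$ and Lemma~\ref{thm:lam} again. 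You instead notice that the step $\lambda_{i;j}-\lambda_{i;j+1}\to 0$ requires only that the monotone bounded sequence converge to \emph{some} limit, not that the limit be $\lambda_i$, so you can extract $\|r_{i;j}\|_{S^{-1}}\to 0$ directly and without the contradiction. You then obtain (a) constructively via the eigen-expansion $u_{i;j}=\sum_{k\ge i}\alpha_k^{(j)}u_k$ and the identity $\|r_{i;j}\|_{S^{-1}}^2=\sum_{k\ge i}|\alpha_k^{(j)}|^2(\lambda_k-\lambda_{i;j})^2$, where the hypothesis $\lambda_{i;0}<\lambda_{i+1}$ together with monotone descent gives the uniform gap $\lambda_{i+1}-\lambda_{i;j}\ge\lambda_{i+1}-\lambda_{i;0}>0$ needed to control the coefficients $\alpha_k^{(j)}$ for $k\ge i+1$. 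Both routes rest on Lemma~\ref{thm:lam} as the engine; your version trades the paper's contradiction for an explicit decomposition, which also makes the directional convergence in (b) fully transparent and exposes the precise role of the hypothesis on $\lambda_{i;0}$. Your separate handling of the terminating case $r_{i;j}=0$ is appropriate and matches the implicit treatment in the paper.
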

\begin{proof} To prove (a), we first notice 
that $\{\lambda_{i;j}\}_j$ is a monotonic decreasing sequence,
and is bounded by $\lambda_i$ from below.
So there exists a real number $\tilde{\lambda}_i$ such that 
$\lambda_{i;j} \rightarrow \tilde{\lambda}_i$ as 
$j\rightarrow \infty$.  Now we show by contradiction 
that $\tilde{\lambda}_i=\lambda_i$.
For any $u_{i;j}$ ($\|u_{i;j}\|_S=1$), we have 
\begin{align*}
\|r_{i;j}\|_{S^{-1}}
& =\|(H-\lambda_{i;j}S)u_{i;j}\|_{S^{-1}} \\ 
& \ge \|(H-\tilde{\lambda}_iS)u_{i;j}\|_{S^{-1}}-(\lambda_{i;j}-\tilde{\lambda}_i)\|Su_{i;j}\|_{S^{-1}}\\
& \ge \min_k{|\lambda_k-\tilde{\lambda}_i|}-(\lambda_{i;j}-\tilde{\lambda}_i).
\end{align*}
As $\lim_{j\rightarrow\infty}\lambda_{i;j}=\tilde{\lambda}_i$, 
there exists a $j_0$ such that for any $j\ge j_0$,
\[
\|r_{i;j}\|_{S^{-1}}> \half\min_k|\lambda_k-\tilde{\lambda}_i|.
\]
By defining $d(r,\kappa):=-g+\sqrt{g^2+(r/\kappa)^2}$,
it follows from Lemma~\ref{thm:lam} that 
for any $j\ge j_0$, it holds that
\begin{align*}
\lambda_{i;j}-\lambda_{i;j+1} 
\ge  d(\|r_{i;j}\|_{S^{-1}},\kappa_{i;j})
>   d(\min_k|\lambda_k-\tilde{\lambda}_i|/2, \kappa_{i;j}) 
 \ge   d(\min_k|\lambda_k-\tilde{\lambda}_i|/2,   q),
\end{align*}
which in the limit becomes
\[
0> d(\min_k|\lambda_k-\tilde{\lambda}_i|/2,   q).
\]
This is a contradiction to the fact that 
$d(\min_k|\lambda_k-\tilde{\lambda}_i|/2,   q)$ is a positive constant.

To prove (b), by using $\lim_{j\rightarrow \infty}\lambda_{i;j}
=\lambda_i$ and Lemma~\ref{thm:lam}, we have
\begin{align*}
\lim_{j\rightarrow\infty} d( \|r_{i;j}\|_{S^{-1}},q)
\le \lim_{j\rightarrow\infty} d( \|r_{i;j}\|_{S^{-1}},\kappa_{i;j})
\le \lim_{j\rightarrow\infty}(\lambda_{i;j}-\lambda_{i;j+1})=0.
\end{align*}
Consequently, $\lim_{j\rightarrow\infty}d(\|r_{i;j}\|_{S^{-1}},q)=0$,
which leads to $\lim_{j\rightarrow\infty}\|r_{i;j}\|_{S^{-1}}=0$
since $1\le q<\infty$.
\end{proof}
We note that 
in Theorem~\ref{thm:psd2}, without assuming $\lambda_{i;0}<\lambda_{i+1}$, by similar argument, we can conclude
$\{\lambda_{i;j}\}_j$ converges to an eigenvalue $\lambda_k$ for some $k\ge i$,
and $\{u_{i;j}\}_j$ directionally converges to the corresponding eigenvector $u_k$.

\subsection{Rate of convergence}\label{subsec:asymptotic}
Theorem~\ref{thm:psd2} concludes the convergence of the 
sequence $\{\lambda_{i;j}\}_j$,
what follows we derive a nonasymptotic estimate of 
the convergence rate of $\{\lambda_{i;j}\}_j$ based on 
the work of Samokish in 1958~\cite{samokish1958steepest}. 
We begin by recalling the following equalities 
for the projection matrix 
$P_{i-1} = I - U_{i-1}U^H_{i-1}S = {U}_{i-1}^c({U}_{i-1}^c)^HS$: 
\begin{subequations}
\begin{align}
P_{i-1} u_{i;j} & =u_{i;j}, \label{eq-1}\\
P_{i-1}^2 & =P_{i-1}, \label{eq-2}\\
P_{i-1}^H(H-\lambda_i S)& =(H-\lambda_i S)P_{i-1}, \label{eq-3}\\
P_{i-1}^HS & =SP_{i-1}, \label{eq-4}
\end{align}
\end{subequations}

First, we have the following lemma.
\begin{lemma}\label{lem:eigen-distribution}
Define
\begin{equation}
M=P_{i-1}^H (H-\lambda_i S)P_{i-1}
\end{equation}
and assume that $K_{i;j}$ is effectively positive definite.
\begin{itemize} 
\item[(a)] $M$ is positive semi-definite and 
$M = G G^H$, where $G=S{U}_i^c({\Lambda}_i^c-\lambda_i I)^{\half}$ 
is of full column rank.

\item[(b)] All eigenvalues of $G^HK_{i;j}G$ are positive.

\item[(c)] The eigenvalues of $K_{i;j}M$ are given by
\begin{equation}
\lambda(K_{i;j}M)=\{0_{[i]}\}\cup\lambda\big(G^HK_{i;j}G\big),
\end{equation}
where $0_{[i]}$ stands for the multiplicity $i$ of the number 0.
\end{itemize} 
\end{lemma}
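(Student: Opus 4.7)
For part (a), my first step is to rewrite the projector $P_{i-1}$ in terms of the complementary eigenvector block. Since $U^HSU=I$ forces $UU^H=S^{-1}$, splitting $S^{-1}=U_{i-1}U_{i-1}^H+U_{i-1}^c(U_{i-1}^c)^H$ gives $P_{i-1}=U_{i-1}^c(U_{i-1}^c)^HS$. Substituting into $M$ and using $(U_{i-1}^c)^H(H-\lambda_i S)U_{i-1}^c=\Lambda_{i-1}^c-\lambda_i I$ yields
\[
M=SU_{i-1}^c(\Lambda_{i-1}^c-\lambda_i I)(U_{i-1}^c)^HS,
\]
which is manifestly positive semidefinite. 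Because the leading diagonal entry of $\Lambda_{i-1}^c-\lambda_i I$ equals $\lambda_i-\lambda_i=0$, the $u_i$ column of $U_{i-1}^c$ drops out, so $M=SU_i^c(\Lambda_i^c-\lambda_i I)(U_i^c)^HS=GG^H$. Full column rank of $G$ follows from the gap assumption $\lambda_i<\lambda_{i+1}$ (which makes $\Lambda_i^c-\lambda_i I$ positive definite), invertibility of $S$, and linear independence of the columns of $U_i^c$.

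For part (b), I will use the fact that effective positive definiteness of $K_{i;j}$ makes $K^c_{i;j}=(U_{i-1}^c)^HSK_{i;j}SU_{i-1}^c$ Hermitian positive definite; its trailing $(n-i)\times(n-i)$ principal submatrix, which equals $(U_i^c)^HSK_{i;j}SU_i^c$, is therefore also positive definite. A direct substitution then gives
\[
G^HK_{i;j}G=(\Lambda_i^c-\lambda_i I)^{1/2}(U_i^c)^HSK_{i;j}SU_i^c(\Lambda_i^c-\lambda_i I)^{1/2},
\]
which is a congruence of a positive definite matrix by the invertible factor $(\Lambda_i^c-\lambda_i I)^{1/2}$, hence positive definite, with strictly positive eigenvalues.

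For part (c), I will invoke the standard fact that for conformable matrices $A$ and $B$, the products $AB$ and $BA$ share the same nonzero eigenvalues with the same algebraic multiplicities. Taking $A=K_{i;j}G$ and $B=G^H$, the nonzero eigenvalues of $K_{i;j}M=K_{i;j}GG^H$ coincide with those of $G^HK_{i;j}G$; by part (b) these are $n-i$ positive numbers accounting for the entire spectrum of the $(n-i)\times(n-i)$ matrix $G^HK_{i;j}G$. The remaining $i$ eigenvalues of the $n\times n$ matrix $K_{i;j}M$ must therefore be zero, consistent with the rank bound $\rank(K_{i;j}M)\le\rank(M)=n-i$.

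The only nontrivial bookkeeping lies in part (a): one has to recognize that the single zero entry of $\Lambda_{i-1}^c-\lambda_i I$ in its top-left position is simultaneously what prevents $M$ from being positive definite and what produces the correct index shift from $U_{i-1}^c$ to $U_i^c$ in the Cholesky-type factor $G$. Once that identity is in hand, parts (b) and (c) reduce to routine congruence and similarity arithmetic, leveraging the spectral gap $\lambda_i<\lambda_{i+1}$ and the effective positive definiteness of $K_{i;j}$.
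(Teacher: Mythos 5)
Your proposal is correct and follows essentially the same route as the paper: rewriting $P_{i-1}=U_{i-1}^c(U_{i-1}^c)^HS$, diagonalizing $(U_{i-1}^c)^H(H-\lambda_iS)U_{i-1}^c=\Lambda_{i-1}^c-\lambda_iI$, dropping the zero entry to pass from $U_{i-1}^c$ to $U_i^c$ and obtain $M=GG^H$, then identifying $G^HK_{i;j}G$ as a congruence of the trailing principal block of $K^c_{i;j}$, and finally invoking the $AB$/$BA$ nonzero-spectrum identity. The only superficial difference is in part (c), where you deduce the zero multiplicity $i$ from a rank count rather than citing the identity $\lambda(AB)=\{0_{[m-n]}\}\cup\lambda(BA)$ with its explicit multiplicity bookkeeping; both are valid and equivalent.
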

\begin{proof}
(a) By the definitions of $M$ and $P_{i-1}$, it easy to see that
\begin{align*}
M& =S{U}_{i-1}^c({U}_{i-1}^c)^H(H-\lambda_i S){U}_{i-1}^c({U}_{i-1}^c)^HS\notag\\
& =S{U}_{i}^c({\Lambda}_{i}^c-\lambda_i I)({U}_{i}^c)^HS
= GG^H \ge 0,
\end{align*}
where $G=S{U}_i^c({\Lambda}_i^c-\lambda_i I)^{\half}$.  

(b) Direct calculation leads to
\begin{align*}
G^HK_{i;j} G
=({\Lambda}_i^c-\lambda_i I)^{\half} \widetilde{K}_{22}({\Lambda}_i^c-\lambda_i I)^{\half},
\end{align*}
where $\widetilde{K}_{22}$ is the trailing $(n-i)$-by-$(n-i)$ principal 
submatrix of $K^c_{i;j}$ by deleting its first $i-1$ rows and first $i-1$ columns.
Since $K_{i;j}$ is effectively positive definite, we know that
$K^c_{i;j}>0$ and  hence $\widetilde{K}_{22}>0$. 
Thus all eigenvalues of $G^HK_{i;j} G$ are positive.

(c) It follows that
\begin{align*}
\lambda(K_{i;j}M)=\lambda(K_{i;j} GG^H)=\{0_{[i]}\}\cup\lambda(G^HK_{i;j} G),
\end{align*}
where we use the well-known identity 
$\lambda(AB)=\{0_{[m-n]}\}\cup\lambda(BA)$ for 
$A\in\mathbb{C}^{m\times n}$ and $B\in\mathbb{C}^{n\times m}$ and $m\ge n$.
\end{proof}

We now give a nonasymptotic estimate of the 
convergence rate of \psdid (Algorithm~\ref{alg:psdid}).

\begin{theorem}\label{thm:Samokish2}
Let $\epsilon_{i;j}= \lambda_{i;j}-\lambda_i$ and 
$\lambda_{i;j}$ be {localized}, namely
\begin{equation}\label{asym-con}
\tau (\sqrt{\theta_{i;j}\epsilon_{i;j}}+\delta_{i;j}\epsilon_{i;j}) < 1,
\end{equation}
then 
\begin{equation}\label{ineq:Samokish2}
\epsilon_{i;j+1}
\le \Bigg[\frac{\Delta + \tau\sqrt{\theta_{i;j}\epsilon_{i;j}}}
{1-\tau (\sqrt{\theta_{i;j}\epsilon_{i;j}}+
\delta_{i;j}\epsilon_{i;j})} \Bigg]^2 \epsilon_{i;j}, 
\end{equation}
where 
$\theta_{i;j}= \|S^{\half}K_{i;j}MK_{i;j}S^{\half}\|$, 
$\delta_{i;j}= \|S^{\half}K_{i;j}S^{\half}\|$, 
$\Delta={(\Gamma - \gamma)}/{(\Gamma + \gamma)}$,
$\Gamma$ and $\gamma$ are
the largest and smallest positive eigenvalues of $K_{i;j}M$,
respectively, and $\tau={2}/{(\Gamma+\gamma)}$.
\end{theorem}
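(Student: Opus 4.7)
The plan is to follow the classical Samokish strategy: construct a specific feasible candidate vector $v$ in the Rayleigh--Ritz trial subspace $\subspan(Z_j)$ satisfying $U_{i-1}^HSv=0$, invoke the minimum characterization $\epsilon_{i;j+1}\le\rho(v)-\lambda_i = v^HMv/v^HSv$, and bound the numerator and denominator separately using the step length $\tau=2/(\Gamma+\gamma)$, which is the Kantorovich--optimal step for preconditioned gradient descent on the quadratic $u^HMu$ with spectrum $\lambda(K_{i;j}M)\subset\{0\}\cup[\gamma,\Gamma]$ (Lemma~\ref{lem:eigen-distribution}).

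First I would take $v=u_{i;j}-\tau P_{i-1}K_{i;j}r_{i;j}$. Using \eqref{eq-1}--\eqref{eq-4} together with the fact $U_{i-1}^HSP_{i-1}=0$, one checks that $v\in\subspan(Z_j)$ and $U_{i-1}^HSv=0$, so $\epsilon_{i;j+1}\le v^HMv/v^HSv$. The pivotal algebraic identity is
\[
Mu_{i;j}=r_{i;j}+\epsilon_{i;j}Su_{i;j},
\]
which follows from $P_{i-1}u_{i;j}=u_{i;j}$ and $P_{i-1}^Hr_{i;j}=r_{i;j}$ (the latter uses \eqref{eq:rij}). Substituting this into $v$ yields the splitting $v=v_0+w$ with
\[
v_0 = P_{i-1}(I-\tau K_{i;j}M)u_{i;j},\qquad w = \tau\epsilon_{i;j}\,P_{i-1}K_{i;j}Su_{i;j}.
\]

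Next I would bound $\|v\|_M$. Using the factorization $M=GG^H$ (Lemma~\ref{lem:eigen-distribution}(a)) and the identity $P_{i-1}^HMP_{i-1}=M$ (since $P_{i-1}^2=P_{i-1}$), one gets
\[
\|v_0\|_M^2 = \|(I-\tau G^HK_{i;j}G)G^Hu_{i;j}\|^2.
\]
Since $G^HK_{i;j}G$ is Hermitian positive definite with spectrum in $[\gamma,\Gamma]$ (Lemma~\ref{lem:eigen-distribution}(b,c)), the optimal step $\tau=2/(\Gamma+\gamma)$ yields $\|I-\tau G^HK_{i;j}G\|\le\Delta$, hence $\|v_0\|_M\le\Delta\sqrt{\epsilon_{i;j}}$. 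For $w$, direct computation gives
\[
\|w\|_M^2 = \tau^2\epsilon_{i;j}^2\,u_{i;j}^HSK_{i;j}MK_{i;j}Su_{i;j}\le\tau^2\epsilon_{i;j}^2\,\theta_{i;j}\|u_{i;j}\|_S^2 = \tau^2\theta_{i;j}\epsilon_{i;j}^2.
\]
Triangle inequality then yields $\|v\|_M^2\le\epsilon_{i;j}(\Delta+\tau\sqrt{\theta_{i;j}\epsilon_{i;j}})^2$. For the denominator, the estimate
$\|v\|_S\ge 1-\tau\|K_{i;j}r_{i;j}\|_S$ combined with $K_{i;j}r_{i;j}=K_{i;j}Mu_{i;j}-\epsilon_{i;j}K_{i;j}Su_{i;j}$ gives, via $\|K_{i;j}Mu_{i;j}\|_S\le\|S^\half K_{i;j}M^\half\|\|M^\half u_{i;j}\|=\sqrt{\theta_{i;j}\epsilon_{i;j}}$ and $\|K_{i;j}Su_{i;j}\|_S\le\delta_{i;j}$, the lower bound
\[
\|v\|_S\ge 1-\tau\bigl(\sqrt{\theta_{i;j}\epsilon_{i;j}}+\delta_{i;j}\epsilon_{i;j}\bigr),
\]
which is positive precisely under the localization hypothesis \eqref{asym-con}. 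Dividing the numerator bound by the square of the denominator bound produces \eqref{ineq:Samokish2}.

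The main obstacle is the interplay between the deflation projector $P_{i-1}$ and the possibly indefinite preconditioner $K_{i;j}$: although $K_{i;j}$ need not be Hermitian positive definite, the Kantorovich contraction factor $\Delta$ must still be extracted, and this works only because Lemma~\ref{lem:eigen-distribution} lets us pass from $K_{i;j}M$ to its symmetric reduction $G^HK_{i;j}G$. A secondary technical point is to carefully track the lower-order perturbation $\epsilon_{i;j}Su_{i;j}$ coming from the identity $r_{i;j}=Mu_{i;j}-\epsilon_{i;j}Su_{i;j}$ through both norms, so that the correction terms consolidate into exactly the quantities $\sqrt{\theta_{i;j}\epsilon_{i;j}}$ and $\delta_{i;j}\epsilon_{i;j}$ that appear in \eqref{ineq:Samokish2} rather than into looser bounds involving $\kappa(K^c_{i;j})$ as in Lemma~\ref{thm:lam}.
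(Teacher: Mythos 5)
Your proposal is correct and follows essentially the same route as the paper's proof: the same trial vector $z=u_{i;j}+\tau P_{i-1}p_{i;j}$, the same decomposition via $r_{i;j}=Mu_{i;j}-\epsilon_{i;j}Su_{i;j}$, the same reduction to $G^HK_{i;j}G$ through $M=GG^H$ to extract the contraction factor $\Delta$, and the same term-by-term bounds yielding $\sqrt{\theta_{i;j}\epsilon_{i;j}}$ and $\delta_{i;j}\epsilon_{i;j}$ in both numerator and denominator. The only cosmetic difference is that you silently use $\|P_{i-1}x\|_S\le\|x\|_S$ to drop $P_{i-1}$ in the denominator estimate, where the paper keeps $P_{i-1}$ and bounds $\|S^{\half}P_{i-1}S^{-\half}\|\le 1$ explicitly.
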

\begin{proof}
%First, it is easy to verify the following equalities:
%\begin{subequations}
%\begin{align}
%\|S^{\half} K_\bot S^{\half}\| & \le \delta_{i;j},\label{eq-5}\\
%\|u_{i;j}\|_M^2 & = \epsilon_{i;j},\label{eq-6}
%\end{align}
%\end{subequations}
%where $K_{\bot}$ is defined in \eqref{eq:kbot}.
Recall $Z_{\bot}=[\, u_{i;j}\; P_{i-1}p_{i;j}\,]$ defined in~\eqref{eq:hr}. 
It is easy to see that by using $Z_{\bot}$, the $(j+1)$th approximate eigenpair 
$(\lambda_{i;j+1}, u_{i;j})$ can be written as 
\[ 
\lambda_{i;j+1} = \min_{v}\rho(Z_{\bot}v). 
%\quad \mbox{and} \quad 
%u_{i;j}=Z_{\bot}\argmin_{v}\rho(Z_{\bot}v). 
\]
Considering a choice of the vector $v$ for the line search, we have
\[
\lambda_{i;j+1} = \min_{v}\rho(Z_{\bot}v)
\leq \rho(Z_{\bot}v)|_{v=[1 \; \tau]^{\top}} = \rho(z),
\]
where $z=Z_{\bot}[1\; \tau]^{\top}= u_{i;j}+\tau P_{i-1} p_{i;j}$. 
Consequently, we have
\begin{align} \label{ineq:samo}
\epsilon_{i;j+1} =  \lambda_{i;j+1}-\lambda_i
\le \rho(z)-\lambda_i=
\frac{z^H (H - \lambda_i S) z} {z^H S z}.
\end{align}
In the following, we provide estimates for the
numerator and denominator of the upper bound \eqref{ineq:samo}. 

For the {numerator} of the upper bound in \eqref{ineq:samo},  it follows that
%\marginpar{\tiny use semi-norm here} 
\begin{subequations}\label{eq-zmz}
\begin{align}
 z^H(H-\lambda_i S)z
& = (u_{i;j}+\tau p_{i;j})^HP_{i-1}^H(H-\lambda_i S)P_{i-1}(u_{i;j}
    +\tau p_{i;j})\notag \\
& =\|u_{i;j}+\tau p_{i;j}\|^2_{M}\notag\\
&=\|u_{i;j}-\tau P_{i-1}K_{i;j}(H-\lambda_{i;j}S)u_{i;j}\|_M^2\notag\\
&=\|u_{i;j}-\tau P_{i-1}K_{i;j}[(H-\lambda_iS)-\epsilon_{i;j}S]u_{i;j}\|_M^2\notag\\
&=\|[I-\tau P_{i-1}K_{i;j}(H-\lambda_iS)]u_{i;j}
+\tau \epsilon_{i;j} P_{i-1}K_{i;j}Su_{i;j}\|_M^2\notag\\
&=\|[I-\tau P_{i-1}K_{i;j}P_{i-1}^H(H-\lambda_iS)P_{i-1}]u_{i;j}
+\tau \epsilon_{i;j} P_{i-1}K_{i;j}S u_{i;j}\|_M^2\label{eq:zmz1}\\
%&=\|[I-\tau P_{i-1}K_{i;j}P_{i-1}^H(H-\lambda_iS)]u_{i;j}
%+\tau \epsilon_{i;j} P_{i-1}K_{i;j}P_{i-1}^HSu_{i;j}\|_M\label{eq:zmz2}\\
%&=\|[I-\tau P_{i-1}K_{i;j}M]u_{i;j}+
%\tau\epsilon_{i;j} K_{\bot}Su_{i;j}\|_M\label{eq:zmz3}\\
& \le \big(\|[I-\tau P_{i-1}K_{i;j} M]u_{i;j}\|_{M}
   + \tau \epsilon_{i;j}\|P_{i-1}K_{i;j} Su_{i;j}\|_{M}\big)^2\label{eq:zmz4},
\end{align}
\end{subequations}
where the equality \eqref{eq:zmz1} uses the 
identities \eqref{eq-1} and \eqref{eq-3}. 
The inequality \eqref{eq:zmz4} uses the triangular inequality  of the
vector norm  induced by the semi-positive definite matrix $M$. 
For the first term in \eqref{eq:zmz4}, using $M=GG^H$ and $G^HP_{i-1}=G^H$, 
where $G$ is defined in Lemma~\ref{lem:eigen-distribution},
we have
\begin{align}
\|[I-\tau P_{i-1}K_{i;j} M]u_{i;j}\|_{M}
&= \| G^H[I-\tau P_{i-1}K_{i;j} GG^H ]u_{i;j}\|\notag\\
&= \|(I -\tau G^HK_{i;j} G)(G^H u_{i;j})\|\notag\\
&\le \|(I -\tau G^H K_{i;j} G)\| \|u_{i;j}\|_M.\label{eq:samo-11}
\end{align}
Note that by Lemma~\ref{lem:eigen-distribution}, it yields that
\begin{align} 
\|(I -\tau G^H K_{i;j} G)\| & =\max_k|1-\tau\lambda_k(G^H K_{i;j} G)| \notag \\ 
& =\max\{|1-\tau\gamma|,|1-\tau\Gamma|\} \notag \\ 
& =\frac{\Gamma-\gamma}{\Gamma+\gamma}=\Delta.  \label{eq:samo-12}
\end{align} 
Consequently, we can rewrite \eqref{eq:samo-11} as
\begin{align}\label{numerator1}
\|[I-\tau P_{i-1}K_{i;j}M]u_{i;j}\|_{M}
\le \Delta\,\|u_{i;j}\|_M = \Delta \sqrt{\epsilon_{i;j}}.
\end{align}
For the second term in \eqref{eq:zmz4}:
\begin{subequations}
\label{numerator2}
\begin{align}
\|P_{i-1}K_{i;j} Su_{i;j}\|^2_M
& = {u_{i;j}^H SK_{i;j} P_{i-1}^H M P_{i-1} K_{i;j} S u_{i;j}} \notag\\
&\le \|S^{\half}K_{i;j}MK_{i;j}S^{\half}\| \|S^{\half}u_{i;j}\|^2\label{eq22}\\
&=\|S^{\half}K_{i;j}MK_{i;j}S^{\half}\|\label{eq23}\\
&=\theta_{i;j},\notag
%& \le \| K_\bot^{\half} Su_{i;j}\|^2_2{\| K_\bot^{\half}
%      M K_\bot^{\half}\|}\label{eq22}\\
%& ={u_{i;j}^HSK_\bot Su_{i;j}}
%    {\lambda_{\max}(K_\bot^{\half} M K_\bot^{\half})}\notag\\% \label{eq23}\\
%& = {u_{i;j}^HSK_iSu_{i;j}} {\lambda_{\max}(K_\bot M)}\label{eq24}\\
%& ={u_{i;j}^H S^{\half} ( S^{\half} K_i S^{\half} )
%     S^{\half}u_{i;j}} \cdot {\lambda_{\max} (K_iM)} \label{eq25}\\
%& \leq {\Gamma}\, \|S^{\half}K_iS^{\half}\|
%                   \|S^{\half}u_{i;j}\|^2\notag\\% \label{eq26}\\
%& =  {\Gamma}\, \delta_{i;j},\notag%\label{eq27}
\end{align}
\end{subequations}
where \eqref{eq22} uses \eqref{eq-2},
\eqref{eq23} uses the fact $\|u_{i;j}\|_S=1$.
Combining \eqref{numerator1} and \eqref{numerator2},  
an estimate of the {numerator} of the upper bound in \eqref{ineq:samo} is given by
\begin{align}
z^H(H-\lambda_i S)z &\leq
%\left( \Delta \sqrt{\epsilon_{i;j}} +
%   \tau\epsilon_{i;j} \sqrt{\Gamma \delta_{i;j}} \right)^2\notag \\
(\Delta+\tau\sqrt{\theta_{i;j}\epsilon_{i;j}})^2 \epsilon_{i;j}.\label{numerator}
\end{align}

For the {denominator} of the upper bound \eqref{ineq:samo}, we first note that
\begin{align}
z^HSz & =\|u_{i;j}+\tau P_{i-1} p_{i;j}\|_S^2  \notag \\ 
      & \ge (\|u_{i;j}\|_S - \tau\|P_{i-1} p_{i;j}\|_S)^2 \notag  \\ 
      & = (1- \tau\|P_{i-1} p_{i;j}\|_S)^2.  \label{ineq:zsz}
\end{align}
By calculations, we have the following upper bound for $\|P_{i-1} p\|_S$:
\begin{subequations}
\label{denominator}
\begin{align}
\|P_{i-1} p_{i;j}\|_S
&= \| P_{i-1} K_{i;j} (H-\lambda_{i;j} S)u_{i;j} \|_S \notag\\%\label{eq31}\\
&=\|P_{i-1} K_{i;j}(H-\lambda_i S)u_{i;j} -\epsilon_{i;j}P_{i-1} K_{i;j}Su_{i;j}\|_S\notag\\%\label{eq32}\\
&\le \|P_{i-1} K_{i;j}(H-\lambda_i S)u_{i;j}\|_S +\epsilon_{i;j} \|P_{i-1} K_{i;j}Su_{i;j}\|_S\notag\\
& = \|P_{i-1} K_{i;j}Mu_{i;j}\|_S +\epsilon_{i;j} \|P_{i-1} K_{i;j}Su_{i;j}\|_S\label{eq33}\\
& \le \|S^{\half}P_{i-1}S^{-\half}\|\|S^{\half} K_{i;j} M^{\half}\| \|M^{\half}u_{i;j}\|
  + \epsilon_{i;j} \|S^{\half}P_{i-1} S^{-\half}\| \|S^{\half}K_{i;j}S^{\half}\| \|S^\half u_{i;j}\|\notag\\
&\le\sqrt{\theta_{i;j}\epsilon_{i;j}} +\delta_{i;j}\epsilon_{i;j}, \label{eq34}
\end{align}
\end{subequations}
where the equality \eqref{eq33} uses \eqref{eq-1} and \eqref{eq-3},
the inequality \eqref{eq34} uses the fact $\|S^\half P_{i-1} S^{-\half}\|\le 1$.

By \eqref{ineq:zsz} and \eqref{eq34},  if
\[
\tau (\sqrt{\theta_{i;j}\epsilon_{i;j}}+\delta_{i;j}\epsilon_{i;j}) < 1,
\]
then the {denominator} of the upper bound \eqref{ineq:samo} satisfies
\begin{equation}\label{ineq:zsz2}
z^HSz \geq (1- \tau (\sqrt{\theta_{i;j}\epsilon_{i;j}}+\delta_{i;j}\epsilon_{i;j}))^2.
\end{equation}
By combining \eqref{ineq:samo}, \eqref{numerator} 
and \eqref{ineq:zsz2}, we derive the 
estimate \eqref{ineq:Samokish2}. This concludes the proof.
\end{proof}

Theorem~\ref{thm:Samokish2} indicates that 
if $\lambda_{i;j}$ is localized (i.e., \eqref{asym-con} is satisfied), 
and  $\Delta +\tau\sqrt{\theta_{i;j}\epsilon_{i;j}}\rightarrow 0$ 
as $j\rightarrow \infty$, then  the \psdid method converges 
{\em superlinearly}.  In this case, we may call that the preconditioner $K_{i;j}$ 
is {\em asymptotically optimal}. In next section, we will consider 
such a preconditioner.

\medskip

To end this section
we note that for the smallest eigenvalue $\lambda_1$, if 
the preconditioner $K_{i;j}$ is chosen to be fixed and positive definite,
i.e., $K_{i;j} = K > 0$,
one can verify that $\theta_{i;j}\le\Gamma\delta_{i;j}$.
Theorem~\ref{thm:Samokish2} becomes the classical 
Samokish's theorem~\cite{samokish1958steepest,ovtchinnikov2006sharp},
which remains asymptotically most accurate estimate of the convergence
rate of the PSD method and is proven to be {\em sharp} \cite{ovtchinnikov2006sharp}.
The proof of Theorem~\ref{thm:Samokish2} relies 
on the triangular inequality~\eqref{eq-zmz},
which is inspired by the proof of Samokish's theorem
presented in \cite{ovtchinnikov2006sharp}. However, 
the treatment of each term in \eqref{ineq:samo} needs to be handled 
diligently in order to accommodates the effects of the projection 
matrix $P_{i-1}$ and the flexible preconditioner $K_{i;j}$.

\section{An asymptotically optimal preconditioner}\label{sec:opt}

In this section, we consider the shift-and-invert preconditioner: 
\begin{equation}\label{eq:kbeta}
\widehat{K}_{i;j}=\big(H-\beta_{i;j} S\big)^{-1},
\end{equation}
where $\beta_{i;j}$ is the shift. 
The following theorem shows that with a proper choice of $\beta_{i;j}$, 
$\widehat{K}_{i;j}$ is asymptotically optimal and consequently, 
the \psdid method converges superlinearly.

\begin{theorem}\label{lem:requirement} 
Consider the shift
\begin{equation}\label{eq:beta}
\beta_{i;j} = \lambda_{i;j} - c \|r_{i;j}\|_{S^{-1}},
\end{equation}
where  the constant
$c= \inf_k {\sqrt{(\lambda_{i;k}-\lambda_i)(\lambda_{i+1}-\lambda_{i;k})}}/{\|r_{i;k}\|_{S^{-1}}}$.
If 

\begin{subequations}
\begin{equation} \label{requirement-c} 
 c >3\sqrt{\Delta_{i;j}} 
\end{equation} 
and 
\begin{equation} \label{requirement2-1}
0 <\Delta_{i;j}<\min \left\{\frac{\Delta_i^2}{4}, \frac{1}{10}\right\},  
\end{equation}
\end{subequations}
where $\Delta_i={(\lambda_i-\lambda_{i-1})}/{(\lambda_{i+1}-\lambda_i)}$ and 
$\Delta_{i;j}={(\lambda_{i;j}-\lambda_i)}/{(\lambda_{i+1}-\lambda_{i;j})}$. 
Then 
\begin{itemize} 
\item[(a)]
 $\beta_{i;j}<\lambda_i$ and 
    $\widehat{K}_{i;j}$ is effectively positive definite. 

\item[(b)] 
 $\beta_{i;j} \rightarrow \lambda_i$ as $j\rightarrow\infty$.

\item[(c)]  
The condition \eqref{asym-con} of Theorem~\ref{thm:Samokish2} is satisfied,
namely, $\lambda_{i;j}$ is {\em localized}.

\item[(d)] 
$\Delta+\tau\sqrt{\theta_{i;j}\epsilon_{i;j}} \rightarrow 0$
as $j\rightarrow\infty$.
\end{itemize} 
By (c) and (d), the preconditioner $\widehat{K}_{i;j}$ 
is asymptotically optimal. 
\end{theorem}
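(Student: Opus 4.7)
The plan is to reduce all four claims to direct spectral calculations. Using the simultaneous diagonalization $H = SU\Lambda U^HS$ with $U^HSU = I$ (so $UU^H = S^{-1}$), whenever $\beta_{i;j}$ avoids $\sigma(H,S)$ we have
\[
\widehat{K}_{i;j} = U(\Lambda-\beta_{i;j}I)^{-1}U^H, \qquad M = SU_i^c(\Lambda_i^c-\lambda_i I)(U_i^c)^HS,
\]
and $S^{\half}U$ is unitary. These identities convert all the quantities in Theorem~\ref{thm:Samokish2} into elementary scalar functions of the eigenvalues and the shift, so the rest of the proof becomes bookkeeping.

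Claim (b) is essentially free: Theorem~\ref{thm:psd2} gives $\lambda_{i;j}\to\lambda_i$ and $\|r_{i;j}\|_{S^{-1}}\to 0$, hence $\beta_{i;j} = \lambda_{i;j}-c\|r_{i;j}\|_{S^{-1}}\to\lambda_i$. For (a), the strict inequality $\beta_{i;j}<\lambda_i$ is equivalent to $c\|r_{i;j}\|_{S^{-1}}>\epsilon_{i;j}$. I would expand $u_{i;j}=U_{i-1}^c a$ with $\|a\|=1$, write $\epsilon_{i;j}=a^H(\Lambda_{i-1}^c-\lambda_i I)a$ and $\|r_{i;j}\|_{S^{-1}}^2=\|(\Lambda_{i-1}^c-\lambda_{i;j}I)a\|^2$, then combine these with the hypothesis $c>3\sqrt{\Delta_{i;j}}$ and $\Delta_{i;j}<\tfrac{1}{10}$ to close the inequality. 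Once $\beta_{i;j}<\lambda_i$ is secured, effective positive definiteness of $\widehat{K}_{i;j}$ is immediate from the spectral formula $(U_{i-1}^c)^HS\widehat{K}_{i;j}SU_{i-1}^c=\diag(1/(\lambda_k-\beta_{i;j}))_{k\ge i}$, which is positive because $\lambda_k\ge\lambda_i>\beta_{i;j}$ for every $k\ge i$.

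For (c) and (d), I would plug the spectral representation into the formulas of Theorem~\ref{thm:Samokish2}. A direct computation shows that the nonzero eigenvalues of $\widehat{K}_{i;j}M$ are $(\lambda_k-\lambda_i)/(\lambda_k-\beta_{i;j})$ for $k>i$; monotonicity of $\lambda\mapsto(\lambda-\lambda_i)/(\lambda-\beta_{i;j})$ on $(\beta_{i;j},\infty)$ then identifies $\gamma=(\lambda_{i+1}-\lambda_i)/(\lambda_{i+1}-\beta_{i;j})$ and $\Gamma=(\lambda_n-\lambda_i)/(\lambda_n-\beta_{i;j})$. By (b) both tend to $1$, so $\Delta\to 0$ and $\tau\to 1$. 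Since $S^{\half}U$ is unitary, $\theta_{i;j}=\max_{k>i}(\lambda_k-\lambda_i)/(\lambda_k-\beta_{i;j})^2$ stays bounded (approaching $1/(\lambda_{i+1}-\lambda_i)$), so $\sqrt{\theta_{i;j}\epsilon_{i;j}}\to 0$ and (d) follows. Verifying (c) is the more delicate step: $\delta_{i;j}=\max_k 1/|\lambda_k-\beta_{i;j}|$ grows like $1/(\lambda_i-\beta_{i;j})=1/(c\|r_{i;j}\|_{S^{-1}}-\epsilon_{i;j})$, so I would use the lower bound from (a) together with the cushion provided by $c>3\sqrt{\Delta_{i;j}}$ and $\Delta_{i;j}<\Delta_i^2/4$ to keep $\delta_{i;j}\epsilon_{i;j}$ uniformly strictly below $1/\tau$.

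The main obstacle will be the quantitative bookkeeping in (a) and (c): the specific constants $3$, $\tfrac{1}{10}$, and $\Delta_i^2/4$ in \eqref{requirement-c}--\eqref{requirement2-1} are tuned precisely so that the lower bound on $c\|r_{i;j}\|_{S^{-1}}$, the control on $\delta_{i;j}\epsilon_{i;j}$, and the separation $\beta_{i;j}<\lambda_i$ all interlock, and the work is in verifying they close simultaneously. Once (c) and (d) are in hand, the asymptotic optimality of $\widehat{K}_{i;j}$ follows by feeding them into Theorem~\ref{thm:Samokish2}.
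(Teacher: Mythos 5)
Your overall strategy matches the paper's: spectrally diagonalize, compute $\Gamma,\gamma,\theta_{i;j},\delta_{i;j}$ in closed form, and then push through the quantitative hypotheses. But there is one load-bearing step you leave as ``bookkeeping'' that is in fact the pivot of the whole argument, and without naming it your part~(a) does not close. The inequality $\beta_{i;j}<\lambda_i$ is equivalent, as you say, to $c\|r_{i;j}\|_{S^{-1}}>\epsilon_{i;j}$. The only tool that makes the hypothesis $c>3\sqrt{\Delta_{i;j}}$ useful here is a \emph{lower} bound on $\|r_{i;j}\|_{S^{-1}}$ in terms of the eigenvalue separations, namely the Kato--Temple inequality
\[
(\lambda_{i;j}-\lambda_i)(\lambda_{i+1}-\lambda_{i;j})\le \|r_{i;j}\|_{S^{-1}}^2,
\]
which the paper applies to $(\lambda_{i;j},a)$ as an approximate eigenpair of $\Lambda_{i-1}^c$. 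Note that the definition of $c$ as an infimum only yields the reverse bound $c\|r_{i;j}\|_{S^{-1}}\le\sqrt{(\lambda_{i;j}-\lambda_i)(\lambda_{i+1}-\lambda_{i;j})}$, so one cannot get $\beta_{i;j}<\lambda_i$ from the definition of $c$ alone. Expanding $\epsilon_{i;j}=a^H(\Lambda_{i-1}^c-\lambda_i I)a$ and $\|r_{i;j}\|_{S^{-1}}^2=\|(\Lambda_{i-1}^c-\lambda_{i;j}I)a\|^2$ puts you in position to prove Kato--Temple from scratch, but that is a nontrivial inequality, not mere substitution, and you need to state it (or reprove it) explicitly.

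Two smaller gaps in your (c): first, $\delta_{i;j}\epsilon_{i;j}<1/\tau$ alone is not sufficient, since the denominator in \eqref{ineq:Samokish2} is $1-\tau(\sqrt{\theta_{i;j}\epsilon_{i;j}}+\delta_{i;j}\epsilon_{i;j})$; you must control $\tau\sqrt{\theta_{i;j}\epsilon_{i;j}}+\tau\delta_{i;j}\epsilon_{i;j}$ jointly. The paper establishes $\tau\sqrt{\theta_{i;j}\epsilon_{i;j}}\le\sqrt{\Delta_{i;j}}$, $\tau\le 1/\gamma<1+\sqrt{\Delta_{i;j}}$, and $\delta_{i;j}\epsilon_{i;j}<1/2$ (the last via $c>3\sqrt{\Delta_{i;j}}$ and Kato--Temple again), then checks the sum stays below one using $\Delta_{i;j}<1/10$. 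Second, the claimed formula $\theta_{i;j}=\max_{k>i}(\lambda_k-\lambda_i)/(\lambda_k-\beta_{i;j})^2$ with the max attained at $k=i+1$ requires a separate monotonicity argument: the map $x\mapsto(x-\lambda_i)/(x-\beta_{i;j})^2$ is not globally decreasing on $(\beta_{i;j},\infty)$ (its derivative changes sign at $x=2\lambda_i-\beta_{i;j}$), and the paper must use the localization of $\beta_{i;j}$ near $\lambda_i$ to verify $f'(x)<0$ for $x\ge\lambda_{i+1}$; you state this as simple monotonicity, which is true only after that verification. Once these points are supplied, the remainder of your argument for (b) and (d) is correct and matches the paper.
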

\begin{proof} ~
(a) By the condition \eqref{requirement2-1}, 
the relation $0<\Delta_{i;j}<0.1$ implies that 
$\lambda_i$ is the closest eigenvalue to $\lambda_{i;j}$.
Let $u_{i;j}=U_{i-1}^c a$ for some $a$, 
then $(\lambda_{i;j}, a)$ is an approximated eigenpair of $\Lambda_{i-1}^c$. 
Using the Kato-Temple inequality \cite{kato1950upper}, we have
%$\lambda_{i;j}-\lambda_i\le \|r_{i;j}\|_{S^{-1}}$ and 
\begin{equation} \label{eq:ktineq}
(\lambda_{i;j}-\lambda_i)(\lambda_{i+1}-\lambda_{i;j})\le \| (\Lambda_{i-1}^c-\lambda_{i;j} I) a\|_2
=\|r_{i;j}\|_{S^{-1}}^2.
\end{equation} 
Therefore, the result $\beta_{i;j} < \lambda_i$ is verified as follows: 
\begin{align*}
\beta_{i;j}-\lambda_i
& =\lambda_{i;j} - c \|r_{i;j}\|_{S^{-1}} - \lambda_i \\ 
& \le \lambda_{i;j} -\lambda_i -c\sqrt{(\lambda_{i;j}-\lambda_i)(\lambda_{i+1}-\lambda_{i;j})} \\ 
& =(\lambda_{i+1}-\lambda_{i;j})(\Delta_{i;j}-c\sqrt{\Delta_{i;j}})<0,
\end{align*}
where for the last inequality we used the condition~\eqref{requirement-c}.

The preconditioner $\widehat{K}_{i;j}$ is effectively positive definite since
\begin{equation}\label{eq:kijc}
\widehat{K}^c_{i;j}=(U_{i-1}^c)^HS\widehat{K}_{i;j}SU_{i-1}^c
=\diag\Big(\frac{1}{\lambda_i-\beta_{i;j}},\frac{1}{\lambda_{i+1}-\beta_{i;j}},
\dots,\frac{1}{\lambda_{n}-\beta_{i;j}}\Big)
\end{equation}
and $\beta_{i;j} <\lambda_i$.

(b) 
By Theorem~\ref{thm:psd2}, we have
\begin{equation} \label{eq:betatolambda}
\beta_{i;j} \rightarrow \lambda_i \quad  \mbox{as $j\rightarrow\infty$}.
\end{equation} 

(c) With the choice of $\beta_{i;j}$ in \eqref{eq:beta}, for $\theta_{i;j}$, we have
\begin{equation} \label{eq:theta}
\theta_{i;j}
= \|S^{\half} \widehat{K}_{i;j}M \widehat{K}_{i;j} S^{\half}\|
= \max_{i+1 \le k \le n} \frac{\lambda_k-\lambda_i}{(\lambda_k-\beta_{i;j})^2}
= \frac{\lambda_{i+1}-\lambda_i}{(\lambda_{i+1}-\beta_{i;j})^2},\\ % \quad\quad (\mbox{since}~ \Delta_{i;j}<0.1) \\
\end{equation} 
where for the last equality, 
we only need to show that $f'(x)<0$ for $x\ge \lambda_{i+1}$, 
where $f(x)=\frac{x-\lambda_i}{(x-\beta_{i;j})^2}$.
By calculations, we have
\[
f'(x)=\frac{2\lambda_i -x-\beta_{i;j}}{(x-\beta_{i;j})^3}<0
\]
since $x-\beta_{i;j}>0$ and
\begin{align*}
2\lambda_i -x - \beta_{i;j}
&\le 2\lambda_i -\lambda_{i+1} -\lambda_{i;j}+c\|r_{i;j}\|_{S^{-1}}
<  \lambda_{i;j} -\lambda_{i+1} + \sqrt{(\lambda_{i;j}-\lambda_i)(\lambda_{i+1}-\lambda_{i;j})}\\
&= (\lambda_{i+1}-\lambda_{i;j})(-1+\sqrt{\Delta_{i;j}})<0.
\end{align*}
For $\delta_{i;j}$, we have
\begin{equation} \label{eq:delta} 
\delta_{i;j}
=\|S^{\half} \widehat{K}_{i;j} S^{\half}\|
=\frac{1}{\min_{1\le k \le n}|\lambda_k-\beta_{i;j}|}
=\frac{1}{\lambda_i-\beta_{i;j}},\\ %\quad \quad 
          % (\mbox{since}~ \Delta_{i;j}<\frac{1}{4}\Delta_i^2) \\ 
\end{equation} 
where for the last equality, we only need to show that $\beta_{i;j}-\lambda_{i-1}>\lambda_i-\beta_{i;j}$,
which is equivalent to
\begin{align*}
2c\|r_{i;j}\|_{S^{-1}} < 2\lambda_{ij}-\lambda_i-\lambda_{i-1}.
\end{align*}
Notice that the right hand side of the above inequality is no less than $\lambda_i-\lambda_{i-1}$,
thus, we only need to show
\[
2c\|r_{i;j}\|_{S^{-1}}<\lambda_i-\lambda_{i-1}.
\]
By calculations, we have
\begin{align*}
\frac{\lambda_i-\lambda_{i-1}}{2c\|r_{i;j}\|_{S^{-1}}}
\ge  \frac{\lambda_i-\lambda_{i-1}}{ 2 \sqrt{(\lambda_{i;j}-\lambda_i)(\lambda_{i+1}-\lambda_{i;j})}}
>\frac{ \Delta_i}{2\sqrt{\Delta_{i;j}}} \ge 1.
\end{align*}
In addition, using Lemma~\ref{lem:eigen-distribution}(c) and \eqref{eq:kijc},
it is easy to see that
\begin{align} \label{eq:temp} 
\Gamma  %= \lambda^+_{\max}(K_{i;j}M) 
= \frac{\lambda_n-\lambda_i}{\lambda_{n}-\beta_{i;j}} 
\quad \mbox{and} \quad
\gamma  %= \lambda^+_{\min}(K_{i;j}M) 
= \frac{\lambda_{i+1}-\lambda_i}{\lambda_{i+1}-\beta_{i;j}}.
\end{align}
Then it follows that 
\begin{align*}
\tau & = 2/(\Gamma + \gamma) \leq 1/\gamma, \\
\tau\sqrt{\theta_{i;j}\epsilon_{i;j}} & 
\le\frac{1}{\gamma}\sqrt{\theta_{i;j}\epsilon_{i;j}}
= \frac{\lambda_{i+1}-\beta_{i;j}}{\lambda_{i+1}-\lambda_i}
\sqrt{\frac{\lambda_{i+1}-\lambda_i}{(\lambda_{i+1}-\beta_{i;j})^2}(\lambda_{i;j}-\lambda_{i})}
= \sqrt{\Delta_{i;j}},\\
\frac{1}{\gamma}&%= \frac{\lambda_{i+1}-\beta_{i;j}}{\lambda_{i+1}-\lambda_i}
= \frac{\lambda_{i+1} - \lambda_{i;j} +{c\|r_{i;j}\|_{S^{-1}}} }{\lambda_{i+1}-\lambda_i}
< \frac{\lambda_{i+1}-\lambda_{i;j}+\sqrt{(\lambda_{i;j}-\lambda_i)(\lambda_{i+1}-\lambda_{i;j})}}{\lambda_{i+1}-\lambda_{i;j}}
=1+\sqrt{\Delta_{i;j}},\\
\delta_{i;j}\epsilon_{i;j}&=\frac{\lambda_{i;j}-\lambda_i}{\lambda_i-\beta_{i;j} }
=\frac{\lambda_{i;j}-\lambda_i}{\lambda_i- \lambda_{i;j} +{c\|r_{i;j}\|_{S^{-1}}} }  
<\frac{\lambda_{i;j}-\lambda_i}{\lambda_i- \lambda_{i;j} +{3\sqrt{\Delta_{i;j}}\|r_{i;j}\|_{S^{-1}}} }\\
&\le\frac{\lambda_{i;j}-\lambda_i}{\lambda_i- \lambda_{i;j} +{3\sqrt{\Delta_{i;j}}    \sqrt{(\lambda_{i;j}-\lambda_i)(\lambda_{i+1}-\lambda_{i;j})}   } }=\frac{1}{-1+3}=\frac{1}{2}.
\end{align*} 
Therefore, 
\begin{align*}
\tau (\sqrt{\theta_{i;j}\epsilon_{i;j}}+\delta_{i;j}\epsilon_{i;j})
%&\leq  \sqrt{\Delta_{i;j}} 
%    +\frac{1}{\gamma}\delta_{i;j}\epsilon_{i;j} \\
%& = \sqrt{\Delta_{i;j}} + 
% \frac{\lambda_{i+1}-\beta_{i;j}}{\lambda_{i+1}-\lambda_i}
%\frac{\lambda_{i;j}-\lambda_i}{\lambda_i-\beta_{i;j} } \\
%& =  \sqrt{\Delta_{i;j}} + 
% \frac{\lambda_{i+1} - \lambda_{i;j} +{c\|r_{i;j}\|_{S^{-1}}} }{\lambda_{i+1}-\lambda_i}
%\frac{\lambda_{i;j}-\lambda_i}{\lambda_i- \lambda_{i;j} +{c\|r_{i;j}\|_{S^{-1}}} } \\
%& ... \mbox{\color{red}{add a couple of more steps for double check}} ... \\ 
%& < \sqrt{\Delta_{i;j}} + \frac{\lambda_{i+1}-\lambda_{i;j}+\sqrt{(\lambda_{i;j}-\lambda_i)(\lambda_{i+1}-\lambda_{i;j})}}{\lambda_{i+1}-\lambda_{i;j}} \frac{1}{-1+3}\\
 \le  \sqrt{\Delta_{i;j}} +\frac{1+\sqrt{ \Delta_{i;j}}}{2}  < 1 .
\end{align*}
(d) By the expressions \eqref{eq:temp}  of $\Gamma$ and $\gamma$, we have
\begin{align*}
\Delta=\frac{\Gamma-\gamma}{\Gamma+\gamma}
&=\frac{(\lambda_n-\lambda_i)(\lambda_{i+1}-\beta_{i;j}) -
 (\lambda_{i+1}-\lambda_i)( \lambda_{n}-\beta_{i;j}) }
          {(\lambda_n-\lambda_i)(\lambda_{i+1}-\beta_{i;j})  +
           (\lambda_{i+1}-\lambda_i)( \lambda_{n}-\beta_{i;j})}\notag\\
&=\frac{(\lambda_n-\lambda_{i+1})(\lambda_{i}-\beta_{i;j}) }
          {(\lambda_n-\lambda_i)(\lambda_{i+1}-\beta_{i;j})  +
         (\lambda_{i+1}-\lambda_i)( \lambda_{n}-\beta_{i;j})}\notag\\
&< \frac{(\lambda_n-\lambda_{i+1})(\lambda_{i}-\beta_{i;j}) }{2(\lambda_n-\lambda_i)(\lambda_{i+1}-\lambda_i) }
< \frac{\lambda_{i}-\beta_{i;j}}{2(\lambda_{i+1}-\lambda_i)}, \label{ineq:beta}
\end{align*}
Combining the above estimates of $\Delta$, 
$\theta_{i;j}\epsilon_{i;j}$ and $\tau$, 
we have
\begin{equation} \label{eq:deltatau}
\Delta+\tau\sqrt{\theta_{i;j}\epsilon_{i;j}}
 < \frac{\lambda_{i}-\beta_{i;j}}{2(\lambda_{i+1}-\lambda_i)} +
   \sqrt{\Delta_{i;j}}. 
\end{equation}
By Theorem~\ref{thm:psd2}(a) and
the result (a) of this theorem, 
the upper bound of \eqref{eq:deltatau} converges to zero as  $j\rightarrow \infty$.
\end{proof}

Four remarks are in order. 

\begin{remark}{\rm  
By the definition of the constant $c$ in \eqref{eq:beta}, we have
\[
c\|r_{i;j}\|_{S^{-1}} \le 
\sqrt{(\lambda_{i+1}-\lambda_i)(\lambda_{i;j}-\lambda_i)} 
\]
and 
\begin{equation}\label{eq:betaapprox} 
\beta_{i;j}=\lambda_{i;j}+\mathcal{O}((\lambda_{i;j}-\lambda_i)^\half).
\end{equation} 
Therefore, in practice,  
we can replace the shift $\beta_{i;j}$ by $\lambda_{i;j}$, and use 
the preconditioner
\begin{equation}\label{eq:ki}
{\widetilde{K}}_{i;j} = (H-\lambda_{i;j}S)^{-1}. 
\end{equation}
We call the preconditioner
${\widetilde{K}}_{i;j}$ a {\em locally accelerated} preconditioner.
} \end{remark} 

\begin{remark}{\rm  
With the locally accelerated preconditioner $\widetilde{K}_{i;j}$,
the corresponding search vector 
$\widetilde{p}_{i;j}=-\widetilde{K}_{i;j}r_{i;j}$. 
A direct calculation gives rise to
\begin{align*}
U^HS(u_{i;j}+\widetilde{p}_{i;j})
&= \begin{bmatrix}
0 & \dots & 0 &
\frac{\lambda_{i;j}-\beta_{i;j}}{\lambda_{i}-\beta_{i;j}}a_{i} &\dots &
\frac{\lambda_{i;j}-\beta_{i;j}}{\lambda_{n}-\beta_{i;j}}a_{n} \end{bmatrix}^{\top},
\end{align*}
where we use the fact $U^HSu_{i;j}=a=[0,\dots,0,a_i,\dots,a_n]^{\top}$. 
Then by Theorem~\ref{lem:requirement}(b), 
we have $U^HS(u_{i;j}+\widetilde{p}_{i;j}) \rightarrow  e_i^{\top}$ 
as $j\rightarrow \infty$.
In the notion of an ideal search vector introduced at the end of 
section~\ref{sec:dpsd}, the search vector $\widetilde{p}_{i;j}$ 
is an asymptotically ideal search vector.
} \end{remark} 

\begin{remark}{\rm  
Before $\lambda_{i;j}$ is localized,
we can use a fixed preconditioner $K_{i;j} = K$ for all $j$.
An obvious choice is to set $K_{i;j}\equiv K_{\sigma}=(H-\sigma S)^{-1}$
for some $\sigma<\lambda_1$. $K_{\sigma}$ is symmetric positive definite and
can be regarded as a global preconditioner for the initial few iterations.
By the convergence of \psdid (Theorem~\ref{thm:psd2}), 
it is guaranteed that the sequence $\{\lambda_{i;j}\}_j$ is strictly 
monotonically decreasing, albeit the convergence may be slow before the locally
accelerated preconditioner $\widetilde{K}_{i;j}$ is applied, see 
the numerical illustration in section~\ref{sec:numer}. 
}\end{remark} 

\begin{remark}{\rm  
%\marginpar{A discussion why we want to consider such a potentially
%ill-conditioned preconditioner.}
As we discussed in section~\ref{sec:intro}, 
we are particularly interested in solving
ill-conditioned Hermitian-definite generalized eigenvalue problem~\eqref{eq:ghep}
where $H$ and $S$ sharing a common near-nullspace $\mathcal{V}$,
whose dimension can be large.
If we set the preconditioner $K_{i;j}\equiv I$,  then
$K_{i;j}M=M=S{U}_{i}^c({\Lambda}_{i}^c-\lambda_i I)({U}_{i}^c)^HS$,
which has a near-nullspace $\mathcal{V}$, and a nullspace $\subspan(U_i)$.
As $\dim(\mathcal{V})>\dim(\subspan(U_i))$, $K_{i;j}M$ has 
very small positive eigenvalues.
Therefore, ${\Gamma}/{\gamma}\gg 1$, and $\Delta\approx 1$.
By Theorem~\ref{thm:Samokish2}, we know that the \psdid method
would converge linearly.
By a similar arguments, we can declare that for any well-conditioned preconditioner 
$K_{i;j}$, the \psdid method would also converge linearly. 
Therefore, in order to achieve the fast convergence, one has to 
apply an ill-conditioned preconditioner such as 
the locally accelerated preconditioner ${\widetilde{K}}_{i;j}$.
%, which inevitably leads to ill-conditioned linear systems for solving 
% the search vectors $p_{i;j}=-K_{i;j}r_{i;j}$ for all $j$'s.
} \end{remark}

\section{Numerical examples}\label{sec:numer}
In this section, we use a MATLAB implementation 
for the \psdid method (Algorithm~\ref{alg:psdid}) with
locally accelerated preconditioners ${\widetilde{K}}_{i;j}$ 
defined in \eqref{eq:ki} to generate
two numerical examples to verify the convergence and the rate of 
convergence of the method. 
To illustrate the efficiency of the method, we focus on 
two ill-conditioned generalized eigenvalue problems~\eqref{eq:ghep} arising
from the PUFE approach to solve differential eigenvalue equations arsing
in quantum mechanics. 
Matlab scripts of the implementation of the \psdid method and
the data that used to generate numerical results
presented in this section can be obtained from the URL
http://dsec.pku.edu.cn/$\sim$yfcai/psdid.html.

\medskip 

To apply ${\widetilde{K}}_{i;j}$, we need to test the 
localization conditions \eqref{requirement-c} and \eqref{requirement2-1}
of the $j$th approximate eigenvalue $\lambda_{i;j}$.  
For the condition \eqref{requirement-c}, note that $c$ is a constant and 
$\Delta_{i;j}$ in limit is zero. 
Therefore, when the residual $r_{i;j}$ is sufficiently small, 
$\lambda_{i;j}$ is close enough to $\lambda_i$, 
then the condition \eqref{requirement-c} will be satisfied.
Therefore, the test of the condition \eqref{requirement-c} can be
replaced by the following residual test: 
\begin{equation} \label{con-2} 
\mbox{Res}[\lambda_{i;j},u_{i;j}]=
\frac{\|Hu_{i;j}-\lambda_{i;j}Su_{i;j}\|}
     {\|Hu_{i;j}\|+|\lambda_{i;j}| \|Su_{i;j}\|}  \leq \tau_1,
\end{equation}
where $\tau_1$ is some prescribed threshold, say $\tau_1 = 0.1$.
%
%{\color{red}{the condition \eqref{requirement-c} test 
%is skipped}}. ...  \marginpar{\tiny rigorous?} 

For the condition \eqref{requirement2-1},
we need the estimates of eigenvalues $\lambda_i$ and $\lambda_{i+1}$ 
to approximate the quantities 
$\Delta_i = {(\lambda_{i}-{\lambda}_{i-1})}/{({\lambda}_{i+1} - \lambda_{i})}$
and 
$\Delta_{i;j} = {(\lambda_{i;j}-{\lambda}_{i})}/{({\lambda}_{i+1} - \lambda_{i;j})}$.
For $\Delta_i$,
it is natural to take  
the $j$th approximates 
${\lambda}_{i;j}$ and ${\lambda}_{i+1;j}$
of ${\lambda}_{i}$ and ${\lambda}_{i+1}$ respectively and yields the following estimate of $\Delta_i$ 
\[
\Delta_i \approx 
\widehat\Delta_i = 
\frac{\lambda_{i;j}-{\lambda}_{i-1}}{{\lambda}_{i+1;j} - \lambda_{i;j}}.
\]
For $\Delta_{i;j}$,
if we simply use $\lambda_{i;j}$ to estimate $\lambda_i$,  
then it leads to $\Delta_{i;j}=0$. 
This violates the condition \eqref{requirement2-1}.
A better estimate of $\lambda_i$ is 
to use the {linear extrapolation} 
$\widehat{\lambda}_i=2\lambda_{i;j}-\lambda_{i;j-1}$
of $\lambda_{i;j-1}$ and $\lambda_{i;j}$
for $j > 1$. Note that when $j=1$, all approximated eigenvalues 
are assumed to be not localized. 
Then it yields the following estimate of $\Delta_{i;j}$: 
\[
\Delta_{i;j} \approx 
\widehat\Delta_{i;j} = \frac{\lambda_{i;j}-\widehat{\lambda}_{i}}{{\lambda}_{i+1;j}-\lambda_{i;j}}
= \frac{\lambda_{i;j-1}-\lambda_{i;j}}{\lambda_{i+1;j} - \lambda_{i;j}}. 
\] 
In order to estimate ${\lambda}_{i+1}$, the Rayleigh-Ritz projection subspace 
in \psdid is spanned by the columns of 
$Z=[U_{i-1}\; u_{i;j}\; \dots\; u_{i+\ell;j}\; p_{i;j}]$ for some $\ell > 1$.
In this case, the \psdid method will also compute $\lambda_{i+1;j}$,
which can be used to approximate $\lambda_{i+1}$.

By the estimates $\widehat\Delta_{i}$ and $\widehat\Delta_{i;j}$, 
the localization condition \eqref{requirement2-1} of the 
$j$th approximate eigenvalue $\lambda_{i;j}$ of $\lambda_i$ 
can be verified by the following condition 
\begin{equation}  \label{con-1}
\widehat\Delta_{i;j} <  
\min\left\{\frac{1}{4} \widehat\Delta^2_i, 0.1 \right\} \equiv  \tau_2.  
\end{equation} 
Note that for computing the smallest eigenvalue $\lambda_1$, 
we let the initial approximate $\lambda_{0,j} = \sigma$ 
for some $\sigma < \lambda_1$.  Here $\sigma$ is a user given parameter or 
a lower bound of $\lambda_1$, 
say $\lambda_{1;j}-\|r_{1;j}\|_{S^{-1}}\approx \lambda_{1;j}-\|r_{1;j}\|$.

\medskip 

We use the preconditioned MINRES \cite{paige1975solution} to compute
the preconditioned search vector 
\begin{equation}\label{eq:pre}
p_{i;j}=- {\widetilde{K}}_{i;j} r_{i;j}
= -(H-\lambda_{i;j} S)^{-1} r_{i;j}.
\end{equation}
In practice, the vector $p_{i;j}$ is just needed to be computed
approximately such that
\begin{equation}\label{pij-con}
\|(H-\lambda_{i;j})p_{i;j}+r_{i;j}\| \le \eta_{i;j}\|r_{i;j}\|,
\end{equation}
where $\eta_{i;j}<1$ is a parameter. In our numerical experiments,
the preconditioner of the MINRES is $S^{-1}$, 
$\eta_{i;j}=\mbox{Res}[\lambda_{i;j},u_{i;j}]$,
and the maximum number of MINRES iterations is set to be 200.
%\footnote{In order to see the superlinear convergence, a large number of maximum iteration number is used.}

%There are alternatives for solving $p_{i;j}$.
%\marginpar{\tiny do we need these discussions?} 
%Our numerical experience shows that
%\begin{itemize}
%\item
%If $H-\lambda_{i;j}S$ has only one single eigenvalue close to zero, 
%as $\lambda_{i;j}\rightarrow \lambda_i$, 
%by solving the preconditioning linear system 
%by MINRES to a relative low accuracy,
%one can solve a $p_{i;j}$ close to the ideal search vector $p_{\ast}$.

%\item
%However, if $H-\lambda_{i;j} S$ has a cluster of eigenvalues close to zero,
%one need to apply a preconditioner $T$ to \eqref{eq:pre} such that
%$T(H-\lambda_{i;j} S)$ has only one eigenvalue close to zero,
%then by using preconditioned MINRES, one can solve a $p_{i;j}$ close to $p_{\ast}$.

%\item
%As preconditioned MINRES is expensive, a two-stage method is proposed in \cite{cai2013hybrid},
%which not only gives a $p_{i;j}$ close to $p_{\ast}$,
%but also reduce the computational cost. However, superlinear convergence is lost with this two stage method.
%\end{itemize}
%%linear system solver tolerance $\eta_{i;j}=\mbox{Res}[\lambda_{i;j},u_{i;j}]$.
%The reason why the above three methods are able to solve a $p_{i;j}$ 
%close to the ideal search vector is beyond the scope of this paper.
%

\medskip
 
All numerical experiments are
performed on a quad-core $\text{Intel}^{\tiny\textregistered} \text{ Xeon}^{\tiny\textregistered}$ Processor E5-2643 running at 3.30GHz
with 31.3GB RAM, machine epsilon $\varepsilon\approx 2.2\times 10^{-16}$.

\begin{example} \label{eg:1dho} {\rm 
Consider the following Schr\"{o}dinger equation for 
a one-dimensional harmonic oscillator: 
\begin{align}\label{eq:oscillator}
-\half \psi''(x)+\half x^2 \psi(x) = 
E \psi(x), \qquad -L\le x\le L,\qquad \psi(-L)=\psi(L)=0,
\end{align}
where $E$ is the energy, $\psi(x)$ is the wavefunction.
This is a well-known model system 
in quantum mechanics \cite{liboff2003introductory,griffiths2004introduction}. 
If $L=\infty$, the eigenvalues 
of the equation \eqref{eq:oscillator} are $\lambda_i=i-0.5$ and 
the corresponding eigenfunctions are $\psi_i(x)=H_i(x)e^{-0.5x^2}$, 
where $H_i(x)$ is the $i$th order Hermite polynomial \cite[Chap. 18]{olver2010nist}.

\medskip

For numerical experiments, 
we set $L=10$ since $\psi_i(x)$ is numerically zero for $|x|>10$.
We discretize the equation \eqref{eq:oscillator} by linear finite element (FE), 
cubic FE and partition of unit FE (PUFE) \cite{melenk1996partition}, respectively.
In all three cases, the eigenfunction $\psi(x)$ is approximated by 
\begin{equation} \label{eq:pufebasis}
\psi^h(x) =\sum_{i} c_i \phi_i (x)+
\sum_{\alpha}\sum_{j} c_{j\alpha}\phi_j^{PU}(x)\tilde{\psi}_{\alpha}(x)
\equiv \sum_{k=1}^n u_k \Phi_k(x), 
\end{equation} 
where $\phi_i(x)$ are the FE basis functions,
$\phi_j^{PU}$ are the FE basis function to form 
enriched basis functions, $\tilde{\psi}_{\alpha}(x)$ are enrichment functions, 
and $c_i$ and $c_{j\alpha}$ are coefficients.
The enrichment term vanishes in the linear and cubic FE cases.
In our numerical experiments, the interval $[-10,10]$ is divided uniformly,
and for PUFE, 
$\phi_i(x)$, $\phi_j^{PU}(x)$ are chosen to be cubic and linear, respectively,
and $\tilde{\psi}_{\alpha}(x)=e^{-0.4x^2}$ for $x\in[-5,5]$ and zero elsewhere.

\medskip

Converting \eqref{eq:oscillator} into its weak form, and 
using $\Phi_i$ as  the test functions,
we obtain an algebraic generalized eigenvalue problem~\eqref{eq:ghep}, 
where $u=[u_1\, u_2\, \dots\, u_n]$ and $(i,j)$ elements 
$h_{ij}$ and $s_{ij}$ of $H$ and $S$ are given by 
\[
h_{ij}=\int_{-10}^{10}(\Phi'(x)\Phi'_j(x)+\frac12 x^2\Phi_i(x)\Phi_j(x))dx 
\quad \mbox{and} \quad 
s_{ij}=\int_{-10}^{10} \Phi_i(x)\Phi_j(x)dx, 
\] 
respectively. 
The left plot of Figure~\ref{fig:1dh} 
shows the errors of the sums of the four 
smallest eigenvalues of $(H,S)$ with respect to the number of FEs of 
three different finite element discretizations. 
The matrix sizes of linear FE are $7, 15, 31, 63, 127, 255$ and $511$.
The matrix sizes for the cubic FE are $23, 47, 95$ and $191$.
The matrix sizes for the PUFE are $28, 56$ $112$.
By the plot, we can see that to achieve the same accuracy, 
the matrix sizes of the PUFE are much smaller.
However, the condition numbers of PUFE matrices $H$, $S$ 
are large; $(\kappa_2(H), \kappa_2(S)) = 
(3.0\times 10^6, 5.0\times 10^6),
(6.5\times 10^8, 2.7\times 10^9),
(8.8\times 10^{10}, 8.1\times 10^{11})$, respectively.

\medskip 

For demonstrating the convergence behavior of \psdid, 
let us compute $m=4$ smallest eigenvalues of 
the PUFE matrices $H$ and $S$ of order $n=112$, which corresponds 
to the mesh size $h=2L/32$. The matrices $H$ and $S$ are ill-conditioned, 
$(\kappa_2(H), \kappa_2(S)) = (8.8\times 10^{10}, 8.1\times 10^{11})$.
Furthermore, $H$ and $S$ share a common near-nullspace, namely
there exists a subspace $\subspan(V)$ of dimension 17 such that 
$\|H V\| = \|SV\| = O(10^{-5})$.
To compute $4$ smallest eigenpairs,
we run the \psdid algorithm for $i=1,2,3,4$ with
$\ell=4$.  The accuracy threshold of computed eigenvalues 
is $\tau_{\rm eig}=10^{-9}$.
$\tau_1=0.1$ is used for the residual test \eqref{con-2}.

\medskip

The right plot of Figure~\ref{fig:1dh} shows  
the convergence history in the relative residuals 
$\mbox{Res}[\lambda_{i;j},u_{i;j}]$ of the \psdid method 
for computing four smallest eigenvalues. 
The localization (i.e., the conditions \eqref{con-2} and \eqref{con-1} are 
satisfied) of the $j$ approximate eigenpair $(\lambda_{i;j},u_{i;j})$ for
computing the $i$th eigenvalue $\lambda_i$ is marked by ``+'' sign. 
The locally accelerated preconditioner 
${\widetilde{K}}_{i;j} = (H-\lambda_{i;j} S)^{-1}$  
is used once $\lambda_{i;j}$ is localized.
As Theorem~\ref{lem:requirement} predicts, 
the locally accelerated preconditioner ${\widetilde{K}}_{i;j}$ 
is asymptotically optimal and leads to superlinear convergence 
of the \psdid algorithm. 

\begin{figure}
\begin{center}
\includegraphics[width=0.45\textwidth]{./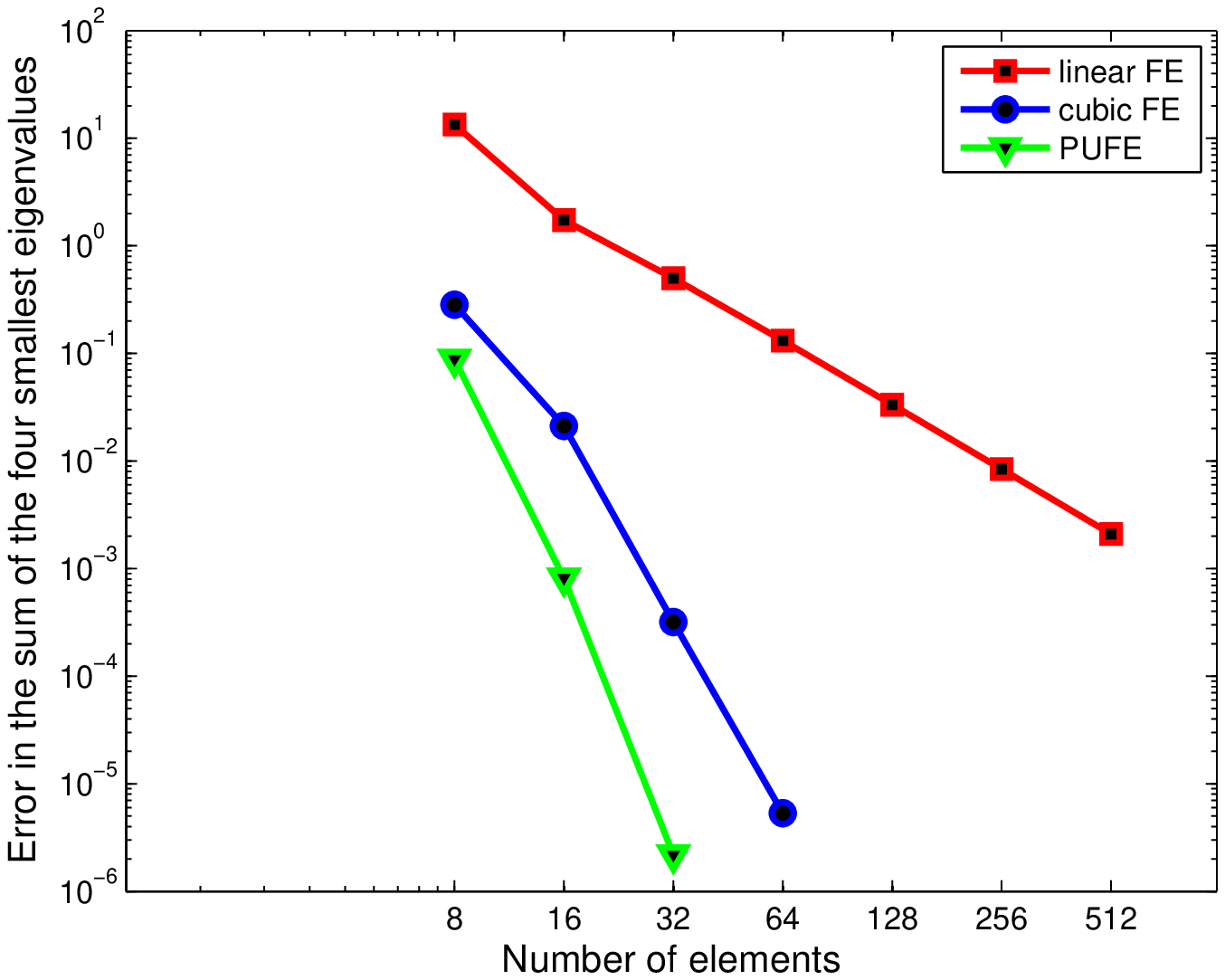}
\includegraphics[width=0.45\textwidth]{./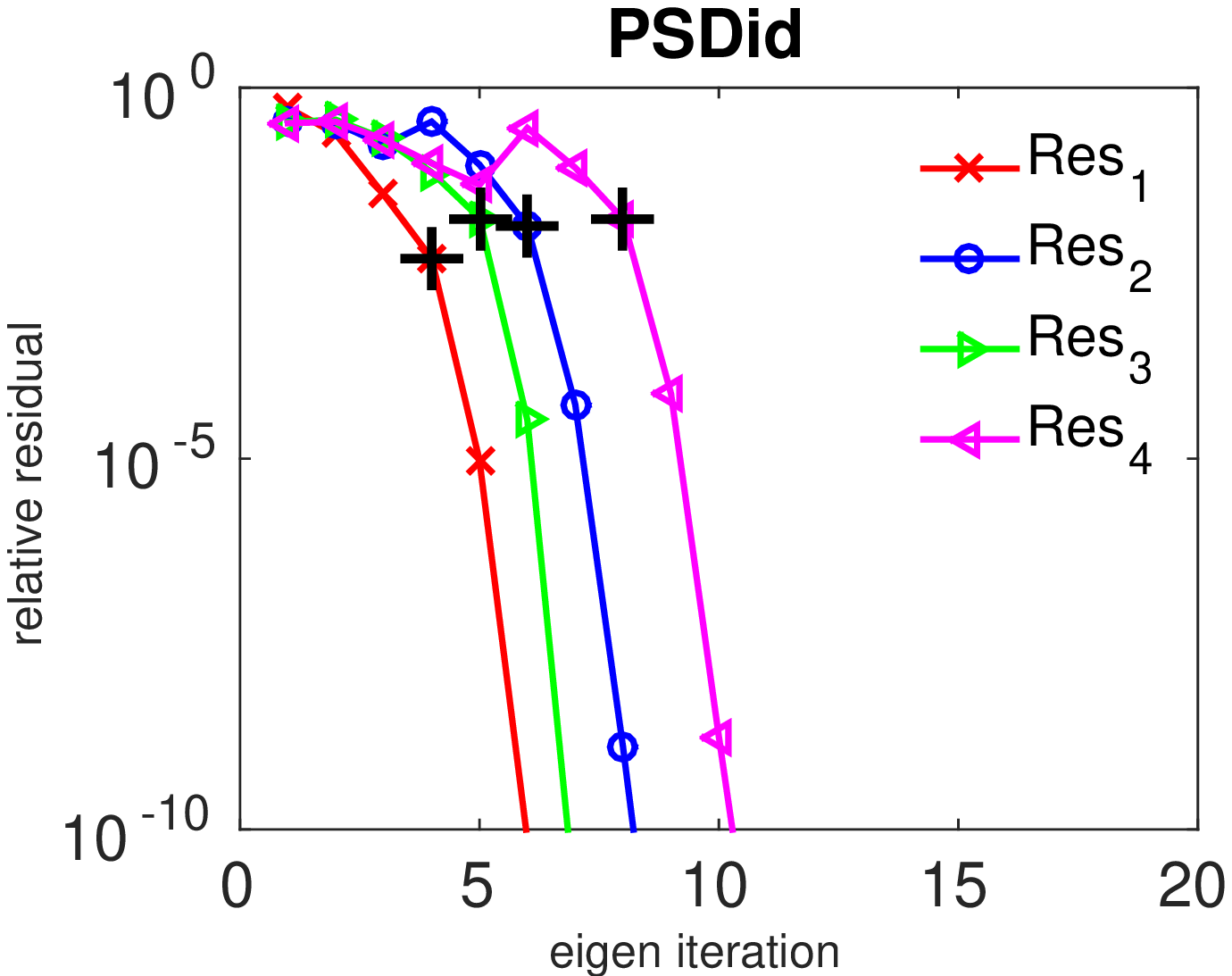}
\end{center}
\caption{Left: error of the sum of four smallest eigenvalues of 
$(H,S)$ with respect to the number of FEs of three different FE discretizatoins 
in Example~\ref{eg:1dho}. 
Right: convergence of the \psdid method for computing four smallest eigenvalues. 
} \label{fig:1dh} 
\end{figure} 

} \end{example} 

\begin{example}\label{eg:ceal} {\rm   
The Hermitian-definite generalized eigenvalue problem \eqref{eq:ghep} is a 
computational kernel in quantum mechanical methods employing a 
nonorthogonal basis for {\em ab initio} three-dimensional electronic 
structure calculations, see~\cite{cai2013hybrid} and references therein.
In this example, we select a sequence of eigenproblems produced by the PUFE 
method for a self-consistent pseudopotential density functional 
calculation for metallic, triclinic CeAl~\cite{sukumar2009classical,pask2012linear,pask2011partition}.  The Brillouin zone 
is sampled at two ${\bf k}$-points: 
${\bf k} = (0.00, \, 0.00,\,  0.00)$ and
${\bf k} = (0.12, \, -0.24,\, 0.37)$. The PUFE approximation for the 
wavefunction is of the form given in the equation~\eqref{eq:pufebasis}  
and we apply a standard Galerkin procedure to set up
the discrete system matrices.  The unit cell is a triclinic box,
with atoms displaced from ideal positions. The primitive lattice 
vectors and the position of the atomic centers are
\[ 
{\bf a}_1 = a(1.00 \quad 0.02 \quad -0.04), \quad
{\bf a}_2 = a(0.01 \quad 0.98 \quad  0.03), \quad
{\bf a}_3 = a(0.03 \quad -0.06 \quad 1.09)
\] 
and
\[ 
{\bf \tau}_{\rm{Ce}} = a(0.01 \quad 0.02 \quad 0.03), \quad
{\bf \tau}_{\rm{Al}} = a(0.51 \quad 0.47 \quad 0.55),
\] 
with lattice parameter $a = 5.75$ bohr.
Since Ce has a full complement of $s$, $p$, $d$, and $f$ states in 
valence, it requires 17 enrichment functions to span the occupied 
space. The near-dependencies between the finite element basis
functions and the enriched basis functions lead to an 
ill-conditioned generalized eigenvalue problem~\eqref{eq:ghep}. 

In this numerical example, the matrix size of $H$ and $S$ 
is $n=7\times 8^3 + 1752=5336$.  
Both $H$ and $S$ are ill conditioned and their condition numbers are 
$(\kappa_2(H), \kappa_2(S)) = (1.1641\times 10^{10}, 2.5731\times 10^{11})$.
Furthermore, $H$ and $S$ share a common near-nullspace $\subspan(V)$ of 
dimension 1000 such that $\|H V\| = \|SV\| = O(10^{-4})$, 
where $V$ is orthonormal. This is an extremely ill-conditioned eigenvalue problem. 
Figure~\ref{fig:CeAl} shows 
the convergence history of the \psdid method for computing
four smallest eigenvalues. As in Figure~\ref{fig:1dh}, 
the localization of the $j$ approximate eigenpair $(\lambda_{i;j},u_{i;j})$ 
is marked by ``+'' sign.  
Once $\lambda_{i;j}$ is localized, 
the locally accelerated preconditioner
${\widetilde{K}}_{i;j} = (H-\lambda_{i;j} S)^{-1}$
is used. 
Again, as Theorem~\ref{lem:requirement} predicts,
the locally accelerated preconditioner ${\widetilde{K}}_{i;j}$
leads to superlinear convergence of the \psdid algorithm.

\begin{figure}
\begin{center}
\includegraphics[width=0.5\textwidth]{./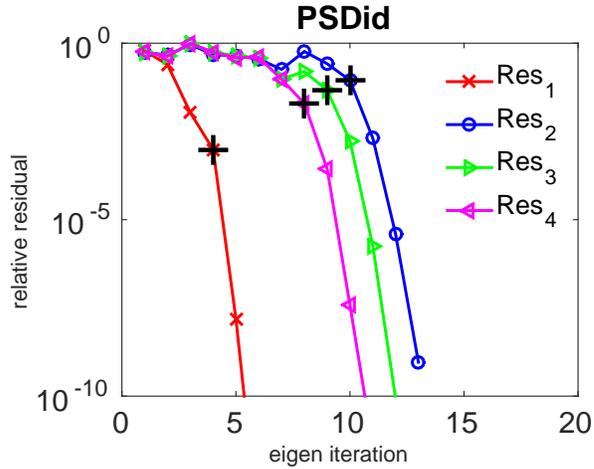}
\end{center}
\caption{
Convergence of the \psdid method for computing four smallest eigenvalues
of the CeAl matrix pair described in Example~\ref{eg:ceal}.
} \label{fig:CeAl} 
\end{figure} 

} \end{example}

\section{Conclusion}\label{sec:conclusion}
In this paper, we proved the convergence of 
the \psdid method, and derived a nonasymptotic estimate of 
the rate of convergence of the method.  
We show that with the proper choice of the shift,
the indefinite shift-and-invert preconditioner
is a locally accelerated preconditioner and 
leads to superlinear convergence.
Two numerical examples are presented to verify the theoretical results
on the convergence behavior of the \psdid method for solving
ill-conditioned Hermitian-definite generalized eigenvalue problems.

\bibliographystyle{abbrv} %plain,unsrt,alpha,abbrv,acm,apalike,siam
%\bibliography{myref}

\end{document}